\documentclass[11pt]{amsart}
\usepackage{amsmath,amssymb,amsthm}
\usepackage{amsfonts}
\usepackage{mathrsfs}
\usepackage[dvips]{graphicx}
\usepackage{color}              
\usepackage{epsfig}
\usepackage{enumerate}
\usepackage{pb-diagram,amscd,amsfonts,amsmath}


\usepackage[margin=1.3in]{geometry}
\setlength{\textheight}{22.5cm}
\setlength{\textwidth}{15cm}
 
 
 



\newtheorem{theorem}{Theorem}[section]
\newtheorem{proposition}[theorem]{Proposition}
\newtheorem{corollary}[theorem]{Corollary}
\newtheorem{lemma}[theorem]{Lemma}

\theoremstyle{definition}
\newtheorem{definition}[theorem]{Definition}

\theoremstyle{remark}



\newcommand{\thmref}[1]{Theorem~\ref{#1}}
\newcommand{\propref}[1]{Proposition~\ref{#1}}

\newcommand{\eqnref}[1]{Equation~\eqref{#1}}

\newcommand{\wtl}{\widetilde}

\newcommand{\tl}{\tilde}

\newcommand{\Aut}{\operatorname{Aut}}

 \newcommand{\bphi}{{ \varphi}}

 \newcommand{\bgamma}{{\overline \gamma}}

\newcommand{\calF}{{\mathcal F}}
\newcommand{\calA}{{\mathcal A}}
\newcommand{\calB}{{\mathcal B}}

\newcommand{\calG}{{\mathcal G}}
\newcommand{\DD}{{\mathcal D}}

\newcommand{\cF}{{\mathcal F}}
\newcommand{\cG}{{\mathcal G}}

\newcommand{\NN}{{\mathcal N}}

\newcommand{\calV}{{\mathcal V}}
\newcommand{\calW}{{\mathcal W}}

\newcommand{\cA}{\mathcal{A}}
\newcommand{\CC}{{\mathbb C}}

\renewcommand{\P}{{\mathbb P}}
\newcommand{\N}{{\mathbb N}}

\newcommand{\Z}{{\mathbb Z}}
\newcommand{\ZZ}{{\mathbb Z}}


\newcommand{\from}{\colon \thinspace}
\newcommand{\ST}{{\, \, \Big| \, \,}}

\newcommand{\emul}{\stackrel{{}_\ast}{\asymp}}
\newcommand{\gmul}{\stackrel{{}_\ast}{\succ}}
\newcommand{\lmul}{\stackrel{{}_\ast}{\prec}}
\newcommand{\eadd}{\stackrel{{}_+}{\asymp}}

\newcommand{\param}{{\mathchoice{\mkern1mu\mbox{\raise2.2pt\hbox{$
\centerdot$}}
\mkern1mu}{\mkern1mu\mbox{\raise2.2pt\hbox{$\centerdot$}}\mkern1mu}{
\mkern1.5mu\centerdot\mkern1.5mu}{\mkern1.5mu\centerdot\mkern1.5mu}}}



\DeclareMathOperator{\Mod}{Mod}




\hyphenation{or-bi-fold}


\begin{document}

\title{Centralizers in Mapping Class Group and decidability of Thurston Equivalence}

 
\author{Kasra Rafi, Nikita Selinger, Michael Yampolsky}

\date{\today}

 \begin{abstract} 
We find a constructive bound for the word length of a generating set for the 
centralizer of an element of the Mapping Class Group.  
As a consequence, we show that it is algorithmically decidable
whether two postcritically finite branched coverings of the sphere are 
Thurston equivalent. 
 \end{abstract}
\maketitle

\section{Introduction}

In \cite{SY}, it has been shown that there exists an algorithm that finds a canonical 
decomposition of an obstructed Thurston map as well as geomertrization of all cycles in 
that decomposition (see Theorem~\ref{th:canonicalgeometrization} for a precise statement). 
Our motivation for the present article is to build an algorithm that can check whether 
pairwise equivalence of the pieces of canonical decompositions of two Thurston maps 
can be glued together into a global equivalence between the two.

A prototype of the algorithm has been already presented in \cite{SY} where the same result has been shown for a subclass of Thurston maps that are only allowed to have hyperbolic cycles in their canonical decompositions. This restriction significantly simplifies the problem as  the group of self-equivalences of a Thurston map with hyperbolic orbifold is trivial (which follows from the fact that an equivalence between two Thurston maps with hyperbolic orbifolds is unique \cite{DH}). We study the self-equivalence groups in the other cases. By constructively characterizing the generators of all of the groups involved, we reduce a countable search to solving a finite number of linear problems. A different approach to the problem of algorithmically verifying Thurston equivalence has been studied in \cite{BD}.

Our key result is a complexity bound on self-equivalences in the case when a first return map on a component in the canonical decomposition of a Thurston map is a homeomorphism, that is, a bound on centralizers of elements of  the Mapping Class Group. 
To accomplish this, we prove the following theorem:

\begin{theorem}
\label{centralizer theorem}
For every element $\phi$ of the Mapping Class Group, the centralizer of $\phi$ has
a generating set where every element has a word length that is bounded by
a uniform multiple of the word length of $\phi$. 
\end{theorem}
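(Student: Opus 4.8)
The plan is to run the Nielsen--Thurston analysis on $\phi$ and read off a generating set for the centralizer $C(\phi)$ from the resulting decomposition of the surface $S$, tracking word lengths throughout. Fix a finite generating set of $\MCG(S)$ and a complete marking $\mu$, write $|g|$ for word length, and recall that $|g|$ is comparable, up to a uniform multiplicative constant, to the marking--graph distance $d(\mu,g\mu)$ and to the Teichm\"uller distance $d_{\mathcal T}(\mu,g\mu)$. Let $\sigma=\sigma(\phi)$ be the canonical reduction system of $\phi$ (empty when $\phi$ is periodic or pseudo-Anosov). Because $\sigma$ is canonically attached to $\phi$, any $\psi$ commuting with $\phi$ satisfies $\psi(\sigma)=\sigma(\psi\phi\psi^{-1})=\sigma$; hence $C(\phi)$ stabilizes $\sigma$ and fits into an exact sequence
\[
1\longrightarrow\Lambda\longrightarrow C(\phi)\longrightarrow Q\longrightarrow 1,
\]
where $\Lambda=C(\phi)\cap\langle T_\gamma:\gamma\in\sigma\rangle$ is generated by the twist products $\prod_{\gamma\in\mathcal O}T_\gamma$ over the $\phi$--orbits $\mathcal O$ of components of $\sigma$, and $Q$ is the image of $C(\phi)$ in the group of mapping classes of the cut surface $S\sm\sigma$ permuting its components and commuting with the induced first--return maps. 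Since $\sigma$ has boundedly many components, it suffices to bound the word lengths of the $T_\gamma$ with $\gamma\in\sigma$ and of a lift to $C(\phi)$ of each of a bounded number of generators of $Q$.

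For $\Lambda$: the key input is that $\sigma$ lies ``close to $\mu$'', namely $\sigma$ extends to a marking realized by some $g\mu$ with $|g|\le\mul|\phi|+\add$ --- equivalently, the components of $\sigma$ become short on a surface within $d_{\mathcal T}$--distance $\mul|\phi|+\add$ of $\mu$. This is the quantitative form of the usual bound $i(\sigma,\mu)\le e^{\mul|\phi|+\add}$: a \Teich geodesic in the direction prescribed by $\phi$ enters the thin part determined by $\sigma$ after time $\mul|\phi|+\add$. Granting it, each $T_\gamma$ with $\gamma\in\sigma$ equals $gT_{\gamma_0}g^{-1}$ for a fixed curve $\gamma_0$ of $\mu$, so $|T_\gamma|\le\mul|\phi|+\add$.

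For $Q$: cut along $\sigma$ and group the components of $S\sm\sigma$ into $\phi$--orbits; on each orbit the first--return map $\phi_i$ on a component $S_i$ is periodic or pseudo-Anosov (the purpose of $\sigma$), and is a conjugate of a restriction of $\phi^{n_i}$ with $n_i$ bounded by the topology of $S$. For periodic $\phi_i$, the associated factor of $Q$ is the centralizer of $\phi_i$ in $\MCG(S_i)$, that is (by Nielsen realization) the lift of the mapping class group of the quotient orbifold $S_i/\langle\phi_i\rangle$, which is generated by Dehn twists and a bounded number of further classes supported in a fixed subsurface independent of $\phi$ --- hence of bounded word length. For pseudo-Anosov $\phi_i$, the centralizer of $\phi_i$ in $\MCG(S_i)$ is virtually cyclic (McCarthy): writing it as $F_i\rtimes\langle\psi_i\rangle$ with $F_i$ finite, it is generated by the primitive root $\psi_i$ (with $\psi_i^{k_i}$ equal to $\phi_i$ times an element of $F_i$) and the generators of $F_i$. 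Finally $Q$ surjects onto a bounded group of permutations of the pieces --- the centralizer of the permutation induced by $\phi$ --- with kernel the product of the per--orbit centralizers above, the permutations being realized by powers of $\phi$ together with finitely many symmetries of the decomposition $(S,\sigma)$.

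The main obstacle, and where the real work lies, is the uniform control of the pseudo-Anosov data: one must bound $|\psi_i|$, the generators of each $F_i$, and the symmetries of $(S,\sigma)$, all by $\mul|\phi|+\add$. The approach is geometric. First, $\log\lambda(\phi_i)\le\mul|\phi|+\add$ because each generator contributes bounded stretch, so $\log\lambda(\psi_i)=\tfrac1{k_i}\log\lambda(\phi_i)\le\mul|\phi|+\add$. Second, $\psi_i$ and every element of $F_i$ preserve the invariant foliation pair of $\phi_i$ and so act on its \Teich axis $A\subset\mathcal T(S_i)$, which by the argument of the second paragraph comes within $d_{\mathcal T(S_i)}$--distance $\mul|\phi|+\add$ of a standard marking of $S_i$; hence $\psi_i$, resp.\ any element of $F_i$, moves that marking a \Teich distance $\le\mul|\phi|+\add$. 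Third, one converts this into a word--length bound via the uniform undistortedness of cyclic subgroups generated by pseudo-Anosovs: moving from the marking to $A$, translating along $A$ by the required amount, and returning costs only a bounded multiple of the corresponding \Teich distances, since these paths can be kept in the thick part of the decomposition, and passing from $\MCG(S_i)$ back to $\MCG(S)$ costs a further bounded multiplicative factor; the symmetries of $(S,\sigma)$ are handled in the same spirit (Nielsen realization together with the ``$\sigma$ close to $\mu$'' estimate). Collecting the $T_\gamma$, the constant--length periodic--piece generators, the pseudo-Anosov roots and symmetries, and the finitely many permutation classes yields a generating set of $C(\phi)$ all of whose elements have word length $\le\mul|\phi|+\add$; for $|\phi|\ge1$ this is the asserted uniform multiple, and the case $\phi=\mathrm{id}$ is trivial.
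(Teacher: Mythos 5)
Your architecture is essentially the paper's: canonical reduction system $\sigma$, an exact sequence separating the twist subgroup, the permutation/symmetry part, and the centralizers of the first-return maps, with the periodic and pseudo-Anosov cases treated via the orbifold quotient and McCarthy's theorem respectively. The gap is in the quantitative core. Your opening premise that $|g|$ is comparable up to a multiplicative constant to $d_{\mathcal T}(\mu,g\mu)$ is false: $T_\gamma^n$ has word length $\asymp n$ but Teichm\"uller displacement $\asymp\log n$. Every step that converts a Teichm\"uller-distance estimate into a word-length bound --- the bound on the primitive root $\psi_i$, on the finite group $F_i$, and on the symmetries of $(S,\sigma)$ --- is therefore unproved. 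The parenthetical ``these paths can be kept in the thick part'' is exactly the assertion that requires proof and is not automatic: the axis of $\phi_i$ and the segment joining it to the base marking need not lie in a uniform thick part, and forcing a path into the thick part can increase its length by an unbounded factor (this is precisely what happens for twist powers). What is actually needed is control of \emph{every} subsurface projection $d_R(\mu_0,\xi\mu_0)$ for $\xi$ in the centralizer; the paper gets this from the Masur--Minsky distance formula by showing $d_R(\mu_0,\xi\mu_0)$ is dominated by $d_R(\mu_0,F_+)+d_R(\mu_0,F_-)$ and that $\sum_R[d_R(\mu_0,F_\pm)]_k\lmul\Vert\phi\Vert$, and by replacing $\xi$ with $\xi\phi^m$ to kill the top-level term. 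Nothing in your argument substitutes for this.

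The bound for the twist subgroup $\Lambda$ has the same defect: $i(\sigma,\mu)\le e^{K\Vert\phi\Vert+C}$ does \emph{not} imply $\Vert T_\gamma\Vert\lmul\Vert\phi\Vert$ for $\gamma\in\sigma$ (take $\gamma=T_\alpha^nT_\beta^n\gamma_0$: then $\log i(\gamma,\mu)\asymp\log n$ while $\Vert T_\gamma\Vert\asymp n$), so the intersection-number/first-entry-time heuristic does not deliver the element $g$ with $\sigma\subset g\mu$ and $\Vert g\Vert\lmul\Vert\phi\Vert$. That statement is true, but the efficient way to obtain it --- and also to normalize the identifications of the cut pieces $S_i$ with model surfaces, which you need anyway in order to promote your ``bounded word length'' generators from $\Mod(S_i)$ to elements of $C(\phi)\subset\Mod(S)$, and to handle the finitely many conjugacy classes of periodic pieces uniformly --- is Tao's theorem that conjugate elements of $\Mod(S)$ admit a conjugator of word length linear in the word lengths of the two elements. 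That theorem, together with the distance formula, is the missing machinery; without one or the other the linear bound does not follow from the structural description of $C(\phi)$ you give, correct though that description essentially is (modulo the further caveat that a twist about an orbit of curves reversed in orientation by a power of $\phi$ does not lie in $C(\phi)$ at all).
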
 

Armed with this statement, we obtain: 

\begin{theorem}
There exists an algorithm which for any two Thurston maps $f$ and $g$ outputs an equivalence $\phi$ if $f$ and $g$ are equivalent, and outputs \textbf{maps are not equivalent} otherwise.
\end{theorem}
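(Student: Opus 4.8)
The plan is to derive this from the canonical decomposition theorem together with \thmref{centralizer theorem}. Given Thurston maps $f$ and $g$, first apply the algorithm of \thmref{th:canonicalgeometrization} to compute the canonical obstructions $\Gamma_f,\Gamma_g$ and the associated canonical decompositions: cutting the sphere along $\Gamma_f$ (resp.\ $\Gamma_g$) produces a finite collection of ``small spheres'' permuted by $f$ (resp.\ $g$), and on each cycle of this permutation the first-return map is a Thurston map of one of three types --- a homeomorphism, a map with Euclidean (parabolic) orbifold, or an unobstructed map with hyperbolic orbifold --- together with an explicit geometrization in each case. Any global Thurston equivalence $\phi$ from $f$ to $g$ must carry $\Gamma_f$ to $\Gamma_g$, hence induces a combinatorial isomorphism of the two decompositions (matching small spheres, respecting the dynamics and the cyclic ordering of curves around each sphere) and restricts, on each cycle, to a Thurston equivalence of the corresponding first-return maps; conversely a global equivalence can be reassembled from such a combinatorial isomorphism plus a compatible choice of piecewise equivalences. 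There are only finitely many candidate combinatorial isomorphisms, and they can be enumerated, so the crux is, for a fixed one, to decide whether the piecewise equivalences can be chosen so as to glue.

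For a single pair of corresponding first-return maps the decision is already available. If the orbifold is hyperbolic, an equivalence is unique by Thurston rigidity \cite{DH} and its existence is decidable as in the prototype of \cite{SY}, and the self-equivalence group of the piece is trivial. If the piece is a homeomorphism, an equivalence exists if and only if the two mapping classes are conjugate in the punctured-sphere mapping class group, which is decidable, and the set of equivalences is a coset of the centralizer of that mapping class. If the orbifold is Euclidean, the map is Latt\`es-type and equivalence is an explicitly decidable comparison of lattice data, with self-equivalence group an explicitly computable crystallographic-type group. Thus each piece comes with a yes/no answer, a particular equivalence $\psi_i$ when the answer is yes, and a finitely generated description of its self-equivalence group $Z_i$.

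The remaining and principal difficulty is the gluing. With a combinatorial isomorphism and particular $\psi_i$ fixed, the general piecewise equivalence is $\psi_i\cdot z_i$ with $z_i\in Z_i$, and the collection glues to a global equivalence precisely when, along every curve of $\Gamma_f$ and around every cycle of the decomposition, the induced boundary identifications and twisting parameters agree; this is a system of equations in the groups $Z_i$ and in the integer twist coordinates attached to the curves of $\Gamma_f$. A priori this is a search over infinitely many tuples $(z_i)$, and an unbounded search would only yield a semi-algorithm for the ``yes'' case. Here \thmref{centralizer theorem} is decisive: each $Z_i$ is generated by elements whose word length is bounded by a uniform multiple of the complexity of $f$ and $g$, so the consistency conditions can be rewritten as a \emph{finite} system that is \emph{linear} in bounded-length generators and in the twist coordinates along $\Gamma_f$. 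Solving this linear system over $\Z$ (together with the finite non-abelian bookkeeping coming from permutations of small spheres and half-twists along curves) is a decidable problem; a solution yields an explicit global equivalence $\phi$, and its absence --- for every candidate combinatorial isomorphism --- certifies that $f$ and $g$ are not equivalent.

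I expect the main technical work to lie exactly in this last reduction: verifying that the gluing obstruction is genuinely governed by the centralizers of the piece dynamics together with the twist parameters of $\Gamma_f$, and that the bound of \thmref{centralizer theorem} really does collapse the search to an effectively linear one --- in particular, carefully tracking isotopies rel the postcritical set, the orientations of the curves of $\Gamma_f$, and the half-twists that can arise when a small sphere is mapped to itself with a non-trivial symmetry.
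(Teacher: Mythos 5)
Your architecture is the paper's: canonical decomposition and geometrization of the pieces, finite enumeration of combinatorial matchings, piecewise equivalences modulo the self-equivalence groups of the pieces, and a final gluing problem. But the gluing step --- which you correctly flag as the crux --- is resolved in the paper by two ingredients you do not supply, and Theorem~\ref{centralizer theorem} is not the result that ``collapses the search'' over twists. The search over twist coordinates in $\ZZ^{\Gamma}$ is a priori infinite even once every $Z_i$ is finitely generated by short elements: knowing generators of the centralizers tells you nothing about which of the infinitely many twist vectors $m_1\in\ZZ^{\Gamma}$ make $\phi_1\circ m_1$ liftable through $f$ and $g$ with matching lift. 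What makes that search finite is Proposition~\ref{p:finiteN} (Proposition 7.8 of \cite{SY}): there is an explicitly computable $N$ such that liftability of $\phi_1\circ(m_1+n)$ depends on $n$ only modulo $N$, and the effect on the lifted twist vector is the linear map $n\mapsto M_\Gamma n$. Only this periodicity turns the twist problem into a finite check on representatives mod $N$ plus an integer linear question (whether $m_1-m_2$ lies in the image of a computable homomorphism built from $n_k-k$ and $n_\psi$). Your proposal asserts the system is ``effectively linear'' but gives no mechanism for it; the centralizer bound does not provide one.

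The second omission is that ``a compatible choice of piecewise equivalences'' hides the requirement that the equivalences chosen on the periodic first-return maps lift through the branched covers $\tilde f$ and $\tilde g$ to every \emph{strictly preperiodic} patched thick component. The paper handles this with the algorithmic Hurwitz classification (Theorem~\ref{th:hurwitz}) and by showing that the liftable elements form a computable finite-index subgroup $C_{\text{thick}}(f)\subset C_{\text{periodic}}(f)$ (Lemma~\ref{lem:selfeq1}); this, rather than the twist search, is where Theorem~\ref{centralizer theorem} actually enters, via Corollary~\ref{cor:mcalg} and Theorem~\ref{thm-centralizer}, by making the generating sets of the $Z_i$ computable so that the coset checks are finite. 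Without Proposition~\ref{p:finiteN} and the Hurwitz/liftability step, your reduction to a finite linear system is not established.
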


\bigskip 
\section{Background}

\subsection{Thurston maps}
\label{section1}

\label{section1.1}
Let $f:S^{2}\rightarrow S^{2}$ be an orientation-preserving branched covering self-map of the two-dimensional topological sphere. We define the \textit{postcritical set $P_{f}$} by
\[
 P_{f}:=\bigcup_{n>0}f^{\circ n}(\Omega_{f}),
\]
where $\Omega_{f}$ is the set of critical points of $f$. When the postcritical set $P_{f}$ is finite, we say that $f$ is \textit{postcritically finite}.

A {\it (marked) Thurston map} is a pair $(f,Q_f)$ where $f:S^2\to S^2$ is a postcritically finite ramified covering of degree at least 2 and $Q_f$ is a finite collection of marked points
$Q_f\subset S^2$ which contains $P_f$ and is $f$-invariant: $f(Q_f)\subset Q_f$. In particular, all elements of $Q_f$ are pre-periodic for $f$. 
\paragraph*{\bf Thurston equivalence.} 
Two marked Thurston maps $(f,Q_f)$ and $(g,Q_g)$ are \textit{Thurston (or combinatorially) equivalent} if there are homeomorphisms $\phi_{0}, \phi_{1}:S^{2}\rightarrow S^{2}$ such that
\begin{enumerate}
 \item the maps $\phi_{0}, \phi_{1}$ coincide on $Q_f$, send $Q_{f}$ to $Q_{g}$ and  are isotopic \text{rel } $Q_f$; 
\item the diagram
\[
\begin{CD}
S^{2} @>\phi_{1}>> S^{2}\\
@VVfV @VVgV\\
S^{2} @>\phi_{0}>> S^{2}
\end{CD}
\]
commutes.
\end{enumerate}
We will call $(\phi_0,\phi_1)$ an {\it equivalence pair}.

Let $Q$ be a finite collection of points in $S^2$.
Recall that a simple closed curve $\gamma \subset S^{2}-Q$ is \textit{essential} if it does not bound a disk, is \textit{non-peripheral} if it does not bound a punctured disk.

\begin{definition}A  \textit{multicurve} $\Gamma$ on $(S^{2},Q)$ is a set of disjoint, nonhomotopic, essential, non-peripheral simple closed curves on $S^{2}\setminus Q$.
Let $(f,Q_f)$ be a Thurston map, and set $Q=Q_f$.
A multicurve $\Gamma$ on $S\setminus Q$ is \textit{f-stable} if for every curve $\gamma \in \Gamma$, each component $\alpha$ of $f^{-1}(\gamma)$ is either trivial (meaning inessential or peripheral) or homotopic rel $Q$ to an element of $\Gamma$. 

\end{definition}

\begin{definition}
A {\it Levy cycle} is a multicurve 
$$\Gamma=\{\gamma_0,\gamma_1,\ldots,\gamma_{n-1}\}$$
such that each $\gamma_i$ has a nontrivial preimage $\gamma'_i$, where the  topological degree of $f$ restricted to $\gamma'_i$ is $1$ and $\gamma'_i$ is homotopic to $\gamma_{(i-1)\mod n}$ rel $Q$. A  Levy cycle is \textit{degenerate} if each $\gamma_i$ has a preimage $\gamma'_i$ as above such that   $\gamma'_i$ bounds a disk $D_i$  and the restriction of $f$ to $D_i$ is a homeomorphism and $f(D_i)$ is homotopic to $D_{(i+1)\mod n}$ rel $Q$.

\end{definition}

To any multicurve is associated its \textit{Thurston linear transformation} $f_{\Gamma}:\mathbb{R}^{\Gamma}\rightarrow \mathbb{R}^{\Gamma}$, best described by the following transition matrix
\[
 M_{\gamma \delta}=\sum_{\alpha} \frac{1}{\text{deg}(f:\alpha \rightarrow \delta)}
\]
where the sum is taken over all the components $\alpha$ of $f^{-1}(\delta)$ which are isotopic rel $Q$ to $\gamma$.
Since this matrix has nonnegative entries, it has a leading eigenvalue $\lambda(\Gamma)$ that is real and nonnegative (by the Perron-Frobenius theorem).

The celebrated Thurston's Theorem \cite{DH} is the following:

 \medskip\noindent
{\bf Thurston's Theorem.} {\it Let $f:S^{2} \rightarrow S^{2}$ be a marked Thurston map with    hyperbolic orbifold. Then $f$ is Thurston equivalent to rational function $g$ with a finite set of marked pre-periodic orbits if and only if $\lambda(\Gamma)<1$ for every $f$-stable multicurve $\Gamma$. The rational function $g$ is unique up to conjugation by an automorphism of $\P^1$.}

\medskip
\noindent
In view of this, an $f$-stable multicurve $\Gamma$ with $\lambda(\Gamma)\geq 1$ is called a {\it Thurston obstruction}.

In \cite{SY}, the second and third authors obtained a similar statement for Thurston maps with parabolic orbifolds:

\begin{theorem}
\label{th:degenerateLevy}
  Let $f$ be a Thurston map with postcritical set $P$ and marked set $Q\supset P$ such that the associated orbifold is parabolic and the associated matrix is hyperbolic. Then either $f$ is equivalent to a quotient of an affine map or $f$ admits a degenerate Levy cycle.

Furthermore, in the former case the affine map is defined uniquely up to  affine conjugacy.
\end{theorem}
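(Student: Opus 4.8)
The plan is to compare $f$ with the affine model that its parabolic orbifold forces upon it, and, when the comparison fails, to locate that failure as a degenerate Levy cycle. First I would build the affine model. Because the orbifold $\mathcal{O}$ of $f$ is parabolic it is a good Euclidean orbifold, uniformized by $\mathbb{C}$ with a crystallographic deck group $G$. Lifting $f$ through this uniformization gives a homeomorphism of $\mathbb{C}$ that induces an injective finite-index endomorphism of $G$; by Bieberbach's rigidity theorem this is realized by a unique affine map $A(z)=\alpha z+\beta$, and passing $A$ to the quotient orbifold produces a Latt\`es-type map $g$, a quotient of a (possibly non-conformal) affine map. So $f$ is homotopic to $g$ --- though not, a priori, rel $Q$ --- and the associated matrix is the linear part of $A$ acting on the lattice, hyperbolic by hypothesis. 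Pulling the flat metric of $\mathbb{C}/G$ down to $(S^{2},Q)$, the maps $g$ and $f$ (the latter up to the free homotopy) are locally affine off the finitely many cone points, so that the $f$-preimage of a flat geodesic is again a union of flat geodesics whose lengths are controlled by the eigenvalue moduli of $\alpha$ and the local degrees.

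The free homotopy $f\simeq g$ differs from a genuine Thurston equivalence only in the isotopy class with which it carries the marked set $Q$, and the theorem asserts that this discrepancy can be unwound --- so that $(f,Q)$ is equivalent to $(g,Q')$ for a suitable $g$-forward-invariant finite set $Q'\supseteq P_g$, which is the first alternative --- unless $f$ has a Levy cycle. The reason is the flat geometry: one tracks the discrepancy as a concrete twisting of $(S^{2},Q)$, and a length comparison (equivalently, a Perron--Frobenius comparison for the relevant transition matrices) shows that the only way the twisting can resist being unwound is that some cyclically-permuted subfamily of curves is pulled back by $f$ with degree one --- exactly a Levy cycle. When $f$ has no Levy cycle the first alternative holds; and then the hyperbolicity of the matrix --- which rules out the flexible Latt\`es moduli and makes the Thurston iteration converge as in the hyperbolic-orbifold case --- forces the affine model $A$, through Bieberbach and Thurston rigidity, to be unique up to affine conjugacy, which is the last clause of the theorem.

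Finally, a Levy cycle produced this way is necessarily degenerate. Each of its curves $\gamma_i'$ is a flat geodesic carrying the entire local degree of $f$ over the corresponding $\gamma_i$, so a degree-and-area count in the flat metric forces one of the two disks $D_i$ bounded by $\gamma_i'$ to contain no critical point of $f$ and $f|_{D_i}$ to be injective; being a local homeomorphism of a disk onto a disk, $f|_{D_i}$ is then a homeomorphism with $f(D_i)$ homotopic to $D_{i+1}$, which is precisely the degeneracy condition. I expect the main obstacle to be this whole final cluster of steps: proving that the \emph{only} obstruction to realizing the affine model is a Levy cycle --- and not, say, the affine model's own non-conformality, which can itself force a Thurston obstruction carrying no Levy cycle, so that the hyperbolicity hypothesis must be invoked to tell the two phenomena apart --- and then upgrading a bare Levy cycle to a degenerate one by controlling the global topology of the preimage disks. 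Both steps rest on setting the rigidity of the flat Euclidean structure --- well-defined geodesic lengths, locally affine dynamics --- against the hyperbolicity of $\alpha$, which supplies enough expansion both to force the Levy mechanism and to prevent the preimage disks from folding.
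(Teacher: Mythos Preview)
The paper does not prove this theorem; it is quoted from \cite{SY} as background, so there is no in-paper proof to compare against. What you have written is therefore being measured against the original Selinger--Yampolsky argument rather than anything in this article.

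As a strategy your outline is broadly in the right spirit --- lift through the Euclidean orbifold cover, produce an affine map $A$, and analyze the obstruction to promoting the free homotopy $f\simeq g$ to a Thurston equivalence --- but it is a plan, not a proof, and you have correctly identified the place where it is not yet one. The crucial step is the dichotomy itself: you assert that ``the only way the twisting can resist being unwound is that some cyclically-permuted subfamily of curves is pulled back by $f$ with degree one,'' but you give no mechanism for this. In the actual argument one runs Thurston's pullback iteration on the Teichm\"uller space of $(S^2,Q)$ and uses the hyperbolicity of $A$ to show the iteration either converges (giving the affine quotient) or degenerates along a family of short curves; the transition matrix on that family has leading eigenvalue $\ge 1$, and the parabolic-orbifold arithmetic then forces a Levy cycle rather than merely a Thurston obstruction. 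Your ``length comparison / Perron--Frobenius'' sentence gestures at this but does not carry it out, and in particular does not explain why hyperbolicity of $A$ excludes non-Levy obstructions.

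Your degeneracy paragraph is also not right as stated. You cannot assume the Levy curves are flat geodesics --- they are a priori only isotopy classes rel $Q$ --- and the ``degree-and-area count'' you invoke is not a proof that one complementary disk is mapped homeomorphically. The standard route is combinatorial: once you have a Levy cycle for a map with parabolic orbifold, Riemann--Hurwitz on the disks bounded by the $\gamma_i'$ (using that almost all postcritical points have ramification index $2$) shows one side carries no branching, which gives degeneracy. Finally, the uniqueness clause follows not from ``Thurston rigidity'' in the hyperbolic-orbifold sense (which does not apply here) but from the elementary fact that two affine maps of $\mathbb{C}$ inducing the same endomorphism of the lattice up to the deck group are affinely conjugate; you should say this directly rather than invoking convergence of the iteration.
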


\section{Centralizers of elements in the mapping class group}

Let $\phi$ be an element of the mapping class group $\Mod(S)$ of a surface $S$ of finite type. 
Let 
\[
C(\phi) = \Big\{ \psi \in \Mod(S) \, \Big\vert\, \psi \, \phi = \phi \, \psi \Big\} 
\] 
be the centralizer of $\phi$ in $\Mod(S)$. Fix a generating set $\calG$ for 
$\Mod(S)$ and let $\Vert \param \Vert_\calG$ denote the word 
length with respect to this generating set. For $M>0$ define 
\[
C(\phi, M) = \Big\{ \psi \in C(\phi) \ST \Vert \psi \Vert_\calG \leq M \Big\}. 
\]
In this section, we prove the following version of Theorem~\ref{centralizer theorem}:

\begin{theorem}
\label{thm:mcgc} 
There is a constant $M_0$, depending on $S$ and the generating set $\calG$, so that
for every $\phi \in \Mod(S)$, $C\big(\phi, M_0\Vert \phi \Vert_\calG \big)$ generates $C(\phi)$. 
\end{theorem}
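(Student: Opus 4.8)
I would argue along the Nielsen--Thurston classification of $\phi$, the pseudo-Anosov and finite-order cases being the two building blocks. A general $\phi$ reduces to these: $C(\phi)$ preserves the canonical reduction system $\sigma = \sigma(\phi)$, a bounded power $\phi^N$ (with $N = N(S)$) fixes every curve of $\sigma$ and every complementary piece and is pure on each, $\Vert\phi^N\Vert_\calG \le N\Vert\phi\Vert_\calG$, and $C(\phi)$ has uniformly bounded index in $C(\phi^N)$ (an $N$-th root of a fixed mapping class ranges over a uniformly finite set). The Birman--Lubotzky--McCarthy / McCarthy description of $C(\phi^N)$ then exhibits it, up to bounded index, as a product of: the free abelian group of Dehn twists about the curves of $\sigma$; the centralizers of $\phi^N$ on the pseudo-Anosov pieces; the full mapping class groups of the pieces on which $\phi^N$ is trivial; and a finite group permuting the isomorphic curves, pieces, and invariant foliations. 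So it suffices to produce, in each of these factors, a generating set of word length $O(\Vert\phi\Vert_\calG)$. The running tool is that the orbit maps $\Mod(S)\to\calT(S)$ and $\Mod(S)\to\calC(S)$ are coarsely Lipschitz, so $d_{\calT(S)}(x_0,\psi x_0)$ and $d_{\calC(S)}(\ast,\psi\ast)$ are $O(\Vert\psi\Vert_\calG)$ for a fixed basepoint.

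\emph{Finite-order case.} By Nielsen realization, $\phi$ is an isometry of some hyperbolic structure $X\in\calT(S)$, and convexity of the displacement function along geodesics gives $d_{\calT(S)}(x_0,X) = O(d_{\calT(S)}(x_0,\phi x_0)) = O(\Vert\phi\Vert_\calG)$. Then $C(\phi)$ is, up to bounded index, the mapping class group of the quotient orbifold $X/\langle\phi\rangle$, whose complexity is bounded by that of $S$; a bounded generating set of that group by (half-)twists about curves of bounded complexity in the quotient pulls back through the bounded-degree covering $X\to X/\langle\phi\rangle$, which is ``anchored'' within $O(\Vert\phi\Vert_\calG)$ of $x_0$, to a generating set of $C(\phi)$ of word length $O(\Vert\phi\Vert_\calG)$. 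The same reasoning yields a linear conjugacy-length estimate that will be used for the finite factors appearing in the other cases: a finite-order mapping class realizing a prescribed action on data supported within $O(\Vert\phi\Vert_\calG)$ of $x_0$ can be chosen of word length $O(\Vert\phi\Vert_\calG)$.

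\emph{Pseudo-Anosov case.} Every element commuting with $\phi$ fixes the pair $\{[\calF^{+}],[\calF^{-}]\}$ of invariant projective foliations, hence preserves the invariant Teichm\"uller axis $A$ of $\phi$; the homomorphism $C(\phi)\to\Isom(A)\cong\Isom(\R)$ has finite kernel $F$ (properness of the action) and discrete image containing translation by $\log\lambda(\phi)$. Thus $C(\phi)$ is generated by $F$, by one element $\theta$ projecting to a generator of the translation part, and by at most one reflection (swapping $\calF^{\pm}$). Since $\log\lambda(\phi) = O(\Vert\phi\Vert_\calG)$ and the dilatations of pseudo-Anosov maps on a fixed surface are bounded away from $1$, the index $m$ with $\phi\equiv\theta^{m}\pmod F$ is $O(\Vert\phi\Vert_\calG)$; $F$ and the reflection are controlled by the finite-order estimate above. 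The remaining point, $\Vert\theta\Vert_\calG = O(\Vert\phi\Vert_\calG)$, is the main obstacle discussed below.

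\emph{Assembling the reducible case, and the main obstacle.} The curves of $\sigma$ lie within $O(\Vert\phi\Vert_\calG)$ of $x_0$, so the twists $T_c$, $c\in\sigma$, have word length $O(\Vert\phi\Vert_\calG)$; a piece $S_i$ on which $\phi^N$ is trivial has all its essential curves supported at the same scale, since the $\phi^N$-orbit of $x_0$ fellow-travels the subsurface-projection data attached to $S_i$, so a fixed bounded generating set of $\Mod(S_i)$ by (half-)twists about such curves has word length $O(\Vert\phi\Vert_\calG)$ in $\Mod(S)$; the pseudo-Anosov pieces are handled by the previous paragraph; and the finite symmetry factor, whose elements move $x_0$ a distance comparable to $d_{\calT(S)}(x_0,\phi x_0)$, is handled by the linear conjugacy-length estimate. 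The \textbf{main obstacle} is the pseudo-Anosov case, precisely the bound $\Vert\theta\Vert_\calG = O(\Vert\phi\Vert_\calG)$ for a primitive root $\theta$ of $\phi$ (equivalently, for a generator of the virtually cyclic group $C(\phi)$). One cannot deduce it from the dilatation alone: ``small stable translation length'' does not imply ``small word length'' in $\Mod(S)$, since conjugating a fixed pseudo-Anosov by long elements preserves the dilatation. The leverage is that $\theta$ commutes with $\phi$ and fixes $\phi$'s invariant foliations, and these are combinatorially bounded in terms of $\Vert\phi\Vert_\calG$ --- e.g.\ carried by a train track reached from a fixed one by a splitting sequence of length $O(\Vert\phi\Vert_\calG)$, or with coordinates of size $O(\Vert\phi\Vert_\calG)$ in a fixed chart. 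Turning this bounded combinatorial data, the bound $m = O(\Vert\phi\Vert_\calG)$ on the root index, and the coarse geometry of the common axis $A$ near $x_0$ into a genuine word-length bound on $\theta$ is the technical heart of the argument.
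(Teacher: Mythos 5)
Your outline correctly identifies the skeleton (Nielsen--Thurston trichotomy, reduction to finite-order and pseudo-Anosov building blocks), but the step you yourself flag as ``the technical heart'' --- the bound $\Vert\theta\Vert_\calG = O(\Vert\phi\Vert_\calG)$ for a primitive root $\theta$ of a pseudo-Anosov $\phi$ --- is a genuine gap, and it is exactly the point where the paper takes a different and cleverer route. The paper never tries to bound a generator of the virtually cyclic group $C(\phi)$. Instead it puts $\phi$ \emph{itself} into the generating set and proves that for every $\psi\in C(\phi)$ there is a power $m$ with $\Vert\psi\phi^m\Vert_\calG \lmul \Vert\phi\Vert_\calG$; then $\psi=(\psi\phi^m)\phi^{-m}$ exhibits $C(\phi)$ as generated by $\phi$ together with short elements. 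The power $m$ is chosen using the quasi-axis $\calB$ of $\phi$ in the curve graph (so that $\psi\phi^m$ has small translation along $\calB$), and the shortness of $\psi\phi^m$ is then read off from the Masur--Minsky distance formula: the subsurface-projection terms $d_R(\mu_0,\psi\phi^m(\mu_0))$ for $R\subsetneq S$ are controlled by $d_R(\mu_0,F_\pm)$, which are in turn dominated by the distance-formula lower bound for $\Vert\phi\Vert_\calG$ since $\psi\phi^m$ preserves $\{F_+,F_-\}$. This completely sidesteps the root problem; if you want to salvage your version you would need a quantitative root/conjugacy statement that is not available off the shelf, whereas the paper's trick needs nothing beyond Theorem~\ref{Thm:MM} and Bowditch's axes.

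Two further points in your reducible-case assembly need repair. First, the claim that $C(\phi)$ has uniformly bounded index in $C(\phi^N)$ is not justified: the index equals the size of the $C(\phi^N)$-conjugation orbit of $\phi$ inside the set of $N$-th roots of $\phi^N$, and bounding this is itself a nontrivial root-counting problem (for finite-order $\phi$ it is outright false, which is why you must, as the paper does, handle the permutation of pieces directly rather than kill it by passing to a power). The paper instead keeps the permutation data as a decorated dual graph $G$ and generates $C(\phi)$ by admissible multi-twists about $\Gamma_\phi$, a bounded section of $\Aut(G)$, and lifts of generators of the centralizers $C(\phi_\calV)$ of the first return maps. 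Second, your reduction needs the quantitative statement that the first return map $\phi_\calV$ on a piece has word length $O(\Vert\phi\Vert_\calG)$ in the piece's own generating set (the paper's \propref{Prop:subsurface}, again via the distance formula restricted to subsurfaces of $V_1$); your ``fellow-traveling'' remark gestures at this but an actual proof of the theorem must supply it, since otherwise the inductive bound on the pseudo-Anosov and trivial pieces has nothing to feed on.
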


A computational consequence of the above theorem is the following:
\begin{corollary}
  \label{cor:mcalg}
There is an algorithm which, given $\phi \in \Mod(S)$, outputs a
set of generators of $C(\phi)$. 
\end{corollary}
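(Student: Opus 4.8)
The plan is to deduce the corollary from Theorem~\ref{thm:mcgc} together with two classical facts about the mapping class group of a finite-type surface: that it is finitely generated (indeed finitely presented) and that it has a solvable word problem. Assume $\phi\in\Mod(S)$ is given as a word $w$ in the fixed generating set $\calG$, so that $\Vert\phi\Vert_\calG\le\ell:=|w|$. Tracking the constants through the proof of Theorem~\ref{thm:mcgc} to extract an explicit value of $M_0$ (this is precisely what the word ``constructive'' buys us), set $N:=M_0\ell$. Since $C(\phi,M_0\Vert\phi\Vert_\calG)\subseteq C(\phi,N)$, Theorem~\ref{thm:mcgc} already tells us that the finite set $C(\phi,N)$ generates $C(\phi)$.

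The algorithm is then immediate. First I would enumerate all words in $\calG^{\pm1}$ of length at most $N$; there are at most $\sum_{k=0}^{N}(2|\calG|)^{k}$ of them, so this list is finite and effectively produced. Denote this set of words by $W_N$; every element of $C(\phi,N)$ is represented by some element of $W_N$. Next, for each $u\in W_N$ I would decide whether $u$ represents an element of $C(\phi)$, i.e.\ whether the word $u\,w\,u^{-1}w^{-1}$ represents the identity of $\Mod(S)$; this is an instance of the word problem for $\Mod(S)$, which is decidable (for instance $\Mod(S)$ is automatic, or one may use that it is finitely presented and residually finite). Let $T\subseteq W_N$ be the sublist of words that pass this test, and output $T$.

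For correctness: every element represented by a word in $T$ lies in $C(\phi)$ by construction, and the set of group elements represented by $T$ is exactly $C(\phi,N)\supseteq C(\phi,M_0\Vert\phi\Vert_\calG)$, which generates $C(\phi)$ by Theorem~\ref{thm:mcgc}; hence $T$ is a finite generating set for $C(\phi)$, as required. The only inputs beyond bookkeeping are that the constant $M_0$ can be taken explicit and uniform in $\phi$ and that $\Mod(S)$ has a solvable word problem; I therefore expect no real obstacle in the corollary itself — the mathematical content lies entirely in Theorem~\ref{thm:mcgc}.
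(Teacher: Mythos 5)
Your proof is correct and follows exactly the route the paper intends (the paper leaves the corollary as an immediate consequence of Theorem~\ref{thm:mcgc} without writing out the details): enumerate all words of length at most $M_0\Vert\phi\Vert_\calG$ and use the solvability of the word problem in $\Mod(S)$ to test which ones commute with $\phi$. You also correctly flag the one point that needs care, namely that $M_0$ must be effectively computable, which is what the paper's claim of a ``constructive bound'' provides.
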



\subsection{Some tools}

Our main tool is the following theorem of J. Tao. 

\begin{theorem}[\cite{tao:LC}]  \label{Thm:Tao} 
For any fixed generating set $\calG$ for $\Mod(S)$, there exists a constant $K$, 
such that if $\phi, \phi' \in \Mod(S)$ are conjugate, then there is
a conjugating element $\eta$ with
\[
\Vert \eta \Vert_\calG \leq K \max (\Vert \phi \Vert_\calG + \Vert \phi' \Vert_\calG). 
\]
\end{theorem}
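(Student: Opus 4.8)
The plan is to follow Tao's strategy: split into the three Nielsen--Thurston types and, in each, locate a suitable (quasi-)axis or fixed point for $\phi$, show it can be made to coincide coarsely with the one for $\phi'$ by a conjugating element of controlled length, and then bound the residual ``holonomy''. Write $L=\Vert\phi\Vert_\calG+\Vert\phi'\Vert_\calG$; it suffices to produce a conjugator of word length $\lmul L$, which is the asserted bound. We will use the \Teich metric on $\calT(S)$, the Masur--Minsky distance formula (which makes $\Mod(S)$ quasi-isometric to the marking complex and writes distances as a threshold-sum of subsurface projections), and Rafi's combinatorial model for the \Teich metric. Fix a basepoint $o$ in the $\ep_0$-thick part of $\calT(S)$. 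Since the orbit map $g\mapsto g\cdot o$ is Lipschitz, $\log\lambda(\psi)\lmul\Vert\psi\Vert_\calG$ for pseudo-Anosov $\psi$, and for any fixed marking $\mu$ and any subsurface $W$, $d_W(\mu,\psi\mu)\lmul\Vert\psi\Vert_\calG$.

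\emph{Pseudo-Anosov case.} Here $\phi,\phi'$ have invariant \Teich geodesic axes $A_\phi,A_{\phi'}$, every conjugator $\eta$ satisfies $A_{\phi'}=\eta A_\phi$, and $\ell:=\log\lambda(\phi)=\log\lambda(\phi')\lmul\min(\Vert\phi\Vert_\calG,\Vert\phi'\Vert_\calG)$. Step~1 (locate the axis): using that $A_\phi$ is strongly contracting (Minsky) and the thick--thin structure of \Teich geodesics (Rafi), $A_\phi$ meets the $\ep_0$-thick part in a point $x$ whose distance to $o$ and whose combinatorial data --- bounded-domain subsurface projections and the twisting parameters along the short curves of the axis --- are all $\lmul\Vert\phi\Vert_\calG$; likewise one gets a thick $x'\in A_{\phi'}$ with data $\lmul\Vert\phi'\Vert_\calG$. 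Step~2 (alignment): since $x,x'$ are thick with combinatorial data of size $\lmul\Vert\phi\Vert_\calG$ and $\lmul\Vert\phi'\Vert_\calG$ respectively, a quantitative change-of-coordinates builds $g_0\in\Mod(S)$, of word length $\lmul\Vert\phi\Vert_\calG+\Vert\phi'\Vert_\calG=L$, carrying $x$ with its tangent direction to within bounded distance of $x'$; replacing $\phi'$ by $g_0^{-1}\phi' g_0$, the two axes coarsely coincide near $x$. Step~3 (holonomy): a conjugator now coarsely stabilizes this common axis, hence lies in its coarse stabilizer, which for a pseudo-Anosov is virtually the cyclic group generated by a primitive root; after replacing $\eta$ by $\eta\phi^{-k}$ we may assume $\eta$ lies in a bounded window of that cyclic group, and since pseudo-Anosov cyclic subgroups are undistorted this forces $\Vert\eta\Vert_\calG\lmul L$. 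Composing with $g_0$ gives a conjugator between the original maps of length $\lmul L$.

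\emph{Reducible and periodic cases.} For reducible $\phi$, pass to a uniformly bounded power $\psi=\phi^N$ ($N=N(S)$) that is pure and adequately reduced: $\psi$ fixes each curve of its canonical reduction system $\sigma$, restricts on each complementary piece to the identity or a pseudo-Anosov, and is a product of Dehn-twist powers $T_{c_i}^{m_i}$ ($c_i\in\sigma$) with these pieces. As $\sigma$ is $\phi$-invariant, its geometric complexity and the $|m_i|$ are $\lmul\Vert\phi\Vert_\calG$, so a quantitative change-of-coordinates gives $g_0$ of length $\lmul L$ with $g_0\sigma=\sigma(\phi')$ compatibly with the piece decompositions; conjugating by $g_0$ we may assume $\sigma(\phi)=\sigma(\phi')$ with matched pieces. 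The remaining conjugator lies in $\mathrm{Stab}(\sigma)$, a virtual direct product of the mapping class groups of the pieces with the twist group along $\sigma$; conjugacy forces the twist exponents to agree, the pseudo-Anosov pieces are handled by the previous argument inside their own \Teich spaces (using $\Vert\phi|_W\Vert\lmul\Vert\phi\Vert_\calG$ for each piece $W$), and identity pieces are trivial; assembling the boundedly many piecewise conjugators yields one for $\psi,\psi'$ of length $\lmul L$, reconciled with one for $\phi,\phi'$ inside the explicitly described $C(\psi')$. The periodic case is parallel: $\phi$ has order $\le N(S)$, so Nielsen realization and the thick--thin estimates give an $\ep_0$-thick $x$ fixed by $\phi$ with $d_\calT(o,x)\lmul\Vert\phi\Vert_\calG$, and as a conjugator carries $\mathrm{Fix}(\phi)$ onto $\mathrm{Fix}(\phi')$, the alignment-and-holonomy argument runs with the fixed point in place of the axis.

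\emph{Main obstacle.} The heart of the theorem is Steps~1 and~2 of the pseudo-Anosov case: controlling, uniformly over all pseudo-Anosov $\phi$, the location and combinatorics of the axis purely in terms of $\Vert\phi\Vert_\calG$, and then synthesizing the alignment element $g_0$ of length $\lmul L$ from that data --- this is precisely where Tao's work lies, resting on Minsky's contracting-axis estimates, Rafi's combinatorial model for the \Teich metric, and the Masur--Minsky distance formula, and in the reducible case needing these simultaneously in every complementary subsurface. Step~3, the Nielsen--Thurston bookkeeping, and the passage between an element and its pure power are then comparatively routine; combining the three cases produces the constant $K=K(S,\calG)$.
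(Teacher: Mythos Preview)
The paper does not prove \thmref{Thm:Tao}; it is quoted with attribution to \cite{tao:LC} and used as a black box. There is therefore nothing in the paper to compare your proposal against.

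What you have written is a high-level summary of Tao's own strategy --- split by Nielsen--Thurston type, locate an axis or fixed set whose position is controlled by $\Vert\phi\Vert_\calG$, align the two by a change-of-coordinates of length $\lmul L$, then bound the residual element in the coarse stabilizer. As an outline this is broadly faithful to \cite{tao:LC}, and you yourself identify the real content (your Steps~1 and~2 in the pseudo-Anosov case, and their subsurface analogues in the reducible case) as ``precisely where Tao's work lies.'' So the proposal is not an independent proof: the substantive estimates --- that a thick point on the axis with all relevant subsurface projections bounded by $\lmul\Vert\phi\Vert_\calG$ exists, and that two such configurations can be aligned by an element of length $\lmul L$ --- are asserted rather than established. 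If your intent is to cite the theorem (as the paper does), that is fine; if your intent is to supply a proof, the hard quantitative input from \cite{tao:LC} still needs to be written out.
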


Let us introduce the following notations: $a \asymp_C b$, will mean $a<Cb+C$ and $b<Ca+C$, and $a\lmul b$  will mean $a<Nb$ for some fixed $N$,  and $a\emul b$  will mean $a\lmul b$ and $b\lmul a$.

We will also need the Masur-Minsky distance formula \cite{minsky:CCII}. 
For every subsurface $R \subset S$, they define a measure of complexity between 
two curve systems $d_R(\param, \param)$ called the \emph{subsurface projection
distance} (see \cite{minsky:CCII} for more details).

\begin{theorem} \label{Thm:MM} 
For any generating set $\calG$, any marking $\mu_0$, and any threshold 
$k$ that is sufficiently large, there is a uniform constant $C$ so that, for any 
$\eta \in \Mod(S)$, we have 
\[
\Vert \eta \Vert_{\calG}
\asymp_C \sum_{R \subset S}  \Big[d_R\big(\mu_0, \eta(\mu_0)\big)\Big]_k.
\]
Here the sum is over all subsurfaces $R$ of $S$, and the function $[\param ]_k$ is a truncation
function with $[x]_k = x$ when $x\geq k$ and $0$ otherwise.  
\end{theorem}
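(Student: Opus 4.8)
The plan is to obtain this from the distance formula of Masur and Minsky \cite{minsky:CCII} for the \emph{marking complex} together with the \v{S}varc--Milnor lemma. Let $\calM(S)$ denote the marking complex of $S$: its vertices are the complete clean markings on $S$ and its edges are the elementary moves (twists and flips). Since $\Mod(S)$ acts on $\calM(S)$ with finitely many orbits of vertices and finite vertex stabilizers, the orbit map $\eta \mapsto \eta(\mu_0)$ is a quasi-isometry from $(\Mod(S), d_\calG)$ to $\calM(S)$; in particular $\Vert \eta \Vert_\calG \asymp_{C_1} d_{\calM(S)}(\mu_0, \eta(\mu_0))$ with $C_1$ depending only on $S$ and $\calG$. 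It therefore suffices to establish
\[
d_{\calM(S)}(\mu, \nu) \ \asymp_{C_2} \ \sum_{R \subset S} \big[ d_R(\mu, \nu) \big]_k
\]
for all complete markings $\mu, \nu$ and all sufficiently large thresholds $k$, with $C_2$ depending only on $S$ and $k$; specializing to $\mu = \mu_0$, $\nu = \eta(\mu_0)$ and composing the two coarse equivalences yields the theorem, and the uniformity over $\eta \in \Mod(S)$ is automatic because the marking-complex formula holds for all pairs of markings with a single constant.

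For the inequality $\sum_R [d_R(\mu,\nu)]_k \leq A\, d_{\calM(S)}(\mu,\nu) + A$, I would fix a geodesic $\mu = \nu_0, \nu_1, \dots, \nu_n = \nu$ in $\calM(S)$ and use two facts: (i) each subsurface projection $\pi_R \colon \calM(S) \to \calC(R)$ is coarsely Lipschitz with a constant independent of $R$ (a complete marking has nonempty projection to every essential subsurface, so no degenerate cases arise); and (ii) for each step $i$ only a uniformly bounded number of subsurfaces $R$ can have $d_R(\nu_{i-1}, \nu_i)$ exceed a fixed constant, and, by Behrstock's inequality, a subsurface that contributes a definite amount to the left-hand sum must do so over a whole ``active sub-interval'' of $\{0,\dots,n\}$, with these sub-intervals overlapping only boundedly. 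Summing the per-step estimates and regrouping by active interval bounds the truncated sum by a multiple of $n$.

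For the reverse inequality $d_{\calM(S)}(\mu,\nu) \leq A \sum_R [d_R(\mu,\nu)]_k + A$, the constructive heart is to build a \emph{hierarchy of tight geodesics} between $\mu$ and $\nu$ in the curve complexes of the subsurfaces of $S$, in the sense of \cite{minsky:CCII}, and then to \emph{resolve} it into a path in $\calM(S)$ from $\mu$ to $\nu$. The existence and good structure of such a hierarchy rest on the hyperbolicity of all the curve complexes $\calC(R)$ and on the bounded geodesic image theorem. A combinatorial count then shows that the length of the resolved path is comparable, up to uniform multiplicative and additive error, to the sum of the lengths of all geodesics occurring in the hierarchy, and a subsurface $R$ contributes a geodesic of length roughly $d_R(\mu,\nu)$ to the hierarchy exactly when $d_R(\mu,\nu)$ exceeds the threshold; hence this total is comparable to $\sum_R [d_R(\mu,\nu)]_k$.

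The main obstacle is this last direction: constructing the hierarchy and controlling its resolution requires a detailed understanding of how the component geodesics are subordinate to one another, of which subsurfaces appear as domains, and of the fellow-traveling of subsurface projections along tight geodesics — this is the genuinely hard technical content of \cite{minsky:CCII}. By comparison, the \v{S}varc--Milnor reduction is routine, and the lower bound, while requiring the Lipschitz estimate and the Behrstock-type ordering of active subsurfaces, is considerably more elementary.
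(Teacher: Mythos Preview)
The paper does not prove this statement at all: it is quoted as a black-box result from Masur--Minsky \cite{minsky:CCII}, introduced with ``We will also need the Masur--Minsky distance formula,'' and used without further argument. So there is no ``paper's own proof'' to compare against.

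Your sketch is essentially the standard route from \cite{minsky:CCII} itself: reduce word length in $\Mod(S)$ to distance in the marking complex via \v{S}varc--Milnor, then establish the distance formula for the marking complex using the Lipschitz property of subsurface projections and Behrstock-type ordering for the upper bound on the sum, and hierarchies of tight geodesics together with their resolution for the lower bound on marking distance. This is correct in outline, and you rightly identify the hierarchy construction and resolution as the substantive technical core. Since the paper simply imports the result, your proposal is strictly more than what the paper does; it amounts to a summary of the original proof rather than an alternative to anything in this paper.
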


\subsection{Special cases}

\begin{proposition} \label{Prop:Finite-Order}
Theorem~\ref{thm:mcgc} holds if $\phi$ is finite order.
\end{proposition}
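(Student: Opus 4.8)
The plan is to reduce the statement to a finite list of ``model'' finite-order mapping classes, for each of which the centralizer is understood once and for all, and then to transport the answer to an arbitrary finite-order $\phi$ using a conjugator whose length is controlled by Tao's theorem. Concretely: fix representatives $\phi_1,\dots,\phi_m$ of the finitely many conjugacy classes of finite-order elements of $\Mod(S)$, and for each $i$ a finite generating set $H_i$ of $C(\phi_i)$; set $L_0=\max_i\Vert\phi_i\Vert_\calG$ and $B_0=\max_i\max_{h\in H_i}\Vert h\Vert_\calG$, constants depending only on $(S,\calG)$. Given a finite-order $\phi$, pick $i$ with $\phi$ conjugate to $\phi_i$ and apply Theorem~\ref{Thm:Tao} to get $\eta$ with $\eta\phi_i\eta^{-1}=\phi$ (after possibly replacing $\eta$ by $\eta^{-1}$) and $\Vert\eta\Vert_\calG\le K(\Vert\phi\Vert_\calG+L_0)$; take $\calG$ symmetric so that $\Vert\eta^{-1}\Vert_\calG=\Vert\eta\Vert_\calG$. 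Then $C(\phi)=\eta\,C(\phi_i)\,\eta^{-1}$ is generated by $\{\eta h\eta^{-1}:h\in H_i\}$, whose elements have length at most $2\Vert\eta\Vert_\calG+B_0\le 2K\Vert\phi\Vert_\calG+2KL_0+B_0$. Setting $M_0:=2K+2KL_0+B_0$ and using $\Vert\phi\Vert_\calG\ge 1$ for $\phi\neq\mathrm{id}$, this is $\le M_0\Vert\phi\Vert_\calG$, so these generators lie in $C(\phi,M_0\Vert\phi\Vert_\calG)$; the case $\phi=\mathrm{id}$ is trivial, since then $C(\phi)=\Mod(S)=\langle\calG\rangle$.

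The two facts that make this work — and the only non-formal content — are: (a) there are only finitely many conjugacy classes of finite-order elements in $\Mod(S)$ for $S$ of finite type, and (b) the centralizer of any finite-order element of $\Mod(S)$ is finitely generated. For (a) I would quote the classical finiteness of the number of topological types of finite cyclic group actions on a fixed surface of finite type, combined with Nielsen realization to pass between algebraic conjugacy in $\Mod(S)$ and topological equivalence of actions. For (b) I would realize $\langle\phi_i\rangle$ as a group of isometries of a complete finite-area hyperbolic structure on $S$ and pass to the quotient orbifold $\calO=S/\langle\phi_i\rangle$, which is again of finite type: this exhibits $C(\phi_i)$ as commensurable with a (liftable, finite-index) subgroup of $\Mod(\calO)$, the kernel of the descent map being the finite deck group, so $C(\phi_i)$ is finitely generated because $\Mod(\calO)$ is. Both inputs are standard, so it suffices to cite them.

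I expect the bookkeeping to be routine; the single point to be watched is that $L_0$ and $B_0$ depend only on the pair $(S,\calG)$ and not on $\phi$ — which is exactly what the finiteness in (a) and the finiteness of each $H_i$ from (b) guarantee — and that the additive slack $2KL_0+B_0$ gets absorbed into the multiplicative bound $M_0\Vert\phi\Vert_\calG$, which is why one has to dispose of the (harmless) identity element by hand. Apart from that inequality, no quantity in the argument depends on the order or the word length of $\phi$ except through the linear bound $\Vert\eta\Vert_\calG\le K(\Vert\phi\Vert_\calG+L_0)$ supplied by Theorem~\ref{Thm:Tao}.
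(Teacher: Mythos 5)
Your proposal is correct and follows essentially the same route as the paper: finitely many conjugacy classes of finite-order elements, Tao's theorem to control the conjugator, and finite generation of the centralizer of each model element via the quotient orbifold and the liftable finite-index subgroup of its mapping class group (the MacLachlan--Harvey argument). Your version merely makes the constants $L_0$, $B_0$, $M_0$ and the treatment of the identity explicit, which the paper leaves implicit.
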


\begin{proof}
There are finitely many conjugacy classes of finite order elements in $\Mod(S)$
(see for example \cite[Theorem 7.13]{farb:MCG}). By \thmref{Thm:Tao}, it is sufficient to 
show that, for each
finite order element $\phi$, $C(\phi)$ is finitely generated. Indeed, consider   a 
set  $\calF$ of finite order elements by picking one representative from every conjugacy class. If each 
$C(\xi)$, $\xi \in \calF$ is finitely generated, then there is a uniform upper-bound 
$M_1$ for the word length of all elements in any such generating set. If 
$\phi$ is conjugate to $\phi' \in \calF$, there is a conjugating element $\eta$, where 
$\Vert \eta \Vert_\calG \leq K \max (\Vert \phi \Vert_\calG + \Vert \phi' \Vert_\calG)$.
Then, we can find a generating set for $C(\phi)$ by conjugating a generating set for 
$C(\phi')$. But $\Vert \phi' \Vert_\calG$ is uniformly bounded ($\calF$ is finite). 
Hence, there is $M_0$ where the word length of this generating set for $C(\phi)$
are bounded by $M_0 \Vert \phi \Vert_\calG$. 

Now let $\phi$ be any finite order element. 
To see that $C(\phi)$ is finitely generated, let $\Sigma$ be the orbifold quotient of $S$ by 
$\phi$ and $\Mod^{\rm o}(\Sigma)$ be 
the orbifold mapping class group of $\Sigma$. Then, $\Mod^{\rm o}(\Sigma)$ is a finite
index subgroup of $\Mod(\Sigma)$ and hence (say, using Schreier's lemma) 
is finitely generated. There is a finite index
sub-group of $\Mod^{\rm o}(\Sigma)$ that lifts to sub-group $C_\Sigma$ of $\Mod(S)$
(see MacLachlan and Harvey \cite[Theorem 10]{harvey:MT})  
which is also finitely generated. Finally, $C(\phi)$ is a finite extension of $C_\Sigma$
and hence is also finitely generated. 
\end{proof}

\begin{proposition} \label{Prop:pseudo-Anosov}
Theorem~\ref{thm:mcgc} holds if $\phi$ is a pseudo-Anosov element. 
\end{proposition}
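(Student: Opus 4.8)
The plan is to combine the classical structure of the centralizer of a pseudo-Anosov with \thmref{Thm:Tao} and \thmref{Thm:MM}. Throughout, fix a marking $\mu_0$ and a sufficiently large threshold as in \thmref{Thm:MM}, and recall that $\Vert g\Vert_\calG$ is comparable (up to constants depending only on $S$ and $\calG$) to the distance $d(\mu_0,g(\mu_0))$ in the marking complex.

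\emph{Structure of $C(\phi)$.} A pseudo-Anosov $\phi$ has a pair of invariant measured foliations, and every $\psi\in C(\phi)$ preserves each of them; sending $\psi$ to the logarithm of its expansion factor on the unstable one is a homomorphism $C(\phi)\to\R$ whose kernel $K$ is finite and whose image is a discrete subgroup of $\R$ — discreteness being exactly the theorem of Penner that pseudo-Anosov dilatations on $S$ are bounded away from $1$ — hence infinite cyclic. (All of this is classical; see \cite{farb:MCG}.) Choose $\psi_0\in C(\phi)$ mapping to a generator and write $\phi=\psi_0^{\,n}k_0$ with $n\ge 1$ and $k_0\in K$; then every element of $C(\phi)$ is of the form $\psi_0^{\,m}k$, so $\{\psi_0\}\cup\{\psi_0 k : k\in K\}$ generates $C(\phi)$, and it suffices to prove $\Vert\psi_0 k\Vert_\calG\lmul\Vert\phi\Vert_\calG$ for each $k\in K$. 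Each $\psi_0 k$ lies in $C(\phi)$ and maps to a nonzero integer, hence is pseudo-Anosov with the same invariant foliations — and so the same coarse axis $\gamma$ in the marking complex — as $\phi$. Moreover $(\psi_0 k)^m=\psi_0^{\,m}k_m$ with $k_m\in K$ (normality of $K$) and $\phi^m=\psi_0^{\,nm}k_0^{\,m}$ (since $\psi_0$ commutes with $k_0$), and $K$ is finite, so $\Vert k_m\Vert_\calG$ and $\Vert k_0^{\,m}\Vert_\calG$ are bounded independently of $m$; hence the stable translation lengths $\ell_\calG(g)=\lim_{m\to\infty}\Vert g^m\Vert_\calG/m$ satisfy $\ell_\calG(\psi_0 k)=\ell_\calG(\psi_0)=\ell_\calG(\phi)/n\le\ell_\calG(\phi)\le\Vert\phi\Vert_\calG$.

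\emph{Reduction to a minimal representative.} By \thmref{Thm:Tao} there is $\eta$ with $\phi'=\eta\phi\eta^{-1}$ of minimal word length in the conjugacy class of $\phi$ and $\Vert\eta\Vert_\calG\lmul\Vert\phi\Vert_\calG$. Conjugating the generating set above by $\eta$ produces one of $C(\phi')=\eta C(\phi)\eta^{-1}$, changing word lengths by at most $2\Vert\eta\Vert_\calG\lmul\Vert\phi\Vert_\calG$, and $n$ is unchanged while $\Vert\phi'\Vert_\calG\le\Vert\phi\Vert_\calG$. So we may assume $\Vert\phi\Vert_\calG$ realizes the minimal displacement of $\phi$ on the marking complex, i.e. $\mu_0$ is, up to bounded error, a minimal-displacement point. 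Unwinding \thmref{Thm:MM} — the off-axis excursion of $\mu_0$ contributes, essentially twice, to $d(\mu_0,\phi(\mu_0))$ — one obtains the \emph{uniform} bound that the distance from $\mu_0$ to $\gamma$ is $\lmul\Vert\phi\Vert_\calG$.

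\emph{Conclusion.} Let $p\in\gamma$ be nearest $\mu_0$. By the triangle inequality in the marking complex and invariance of the metric under $\psi_0 k$, $\Vert\psi_0 k\Vert_\calG\lmul 2\,d(\mu_0,p)+d(p,(\psi_0 k)(p))$. The first term is $\lmul\Vert\phi\Vert_\calG$ by the previous paragraph. For the second, $p$ lies on the axis $\gamma$ of the pseudo-Anosov $\psi_0 k$, so its displacement there satisfies $d(p,(\psi_0 k)(p))\lmul\ell_\calG(\psi_0 k)$ — again by \thmref{Thm:MM}, this being one ``fundamental domain'' of subsurface-projection data along $\gamma$ — and $\ell_\calG(\psi_0 k)\le\Vert\phi\Vert_\calG$ by the first paragraph. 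Hence $\Vert\psi_0 k\Vert_\calG\lmul\Vert\phi\Vert_\calG$ uniformly in $k\in K$; conjugating back by $\eta^{-1}$ completes the proof.

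\emph{Main obstacle.} The crux is the two uniform consequences of \thmref{Thm:MM} used above: that a minimal-displacement point of a pseudo-Anosov lies within a bounded multiple of its stable length of the axis, and that a point on the axis moves by at most a bounded multiple of the stable length. These require care precisely because the marking complex is not Gromov hyperbolic and the ``axis'' of a pseudo-Anosov is only coarsely defined, so that ``comparable'' is not automatic there; one proves them by expanding the distance formula into a sum of per-fundamental-domain contributions and invoking the Behrstock inequality, which guarantees that the projection data of distinct fundamental domains live along essentially disjoint stretches of $\gamma$. By comparison, the structural input of the first paragraph and the reduction via \thmref{Thm:Tao} are soft.
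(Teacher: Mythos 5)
Your structural setup is sound and in fact a bit more explicit than the paper's: like the paper, you reduce to bounding finitely many representatives of $C(\phi)$ modulo the cyclic part, and your computation that $\ell_\calG(\psi_0 k)\le \Vert\phi\Vert_\calG$ is correct. (One small wrinkle: the ``logarithm of the expansion factor'' is only a homomorphism on the index-$\le 2$ subgroup of $C(\phi)$ that preserves each foliation individually, since an element swapping $F_-$ and $F_+$ has no well-defined expansion factor on $F_+$; you need to add one swapping element, if it exists, to your generating set and handle it alongside the $\psi_0k$. The paper's argument treats the two cases $\xi(F_+)=F_+$ and $\xi(F_+)=F_-$ separately for exactly this reason.)

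The gap is the one you yourself flag in your final paragraph: the two ``uniform consequences of Theorem~\ref{Thm:MM}'' --- that a minimal-displacement point lies within $\lmul\Vert\phi\Vert_\calG$ of the axis $\gamma$ in the marking complex, and that a point of $\gamma$ is displaced by $\lmul\ell_\calG(\psi_0k)$ --- are asserted, not proved, and they are the entire technical content of the proposition. Neither is a quotable fact with the uniformity you need: the marking complex is not hyperbolic, a pseudo-Anosov ``axis'' there is only coarsely defined, and the contraction/fellow-traveling constants for such an axis depend on the subsurface projection data $d_Y(F_-,F_+)$ of the particular map, so the constants must themselves be shown to be controlled by $\Vert\phi\Vert_\calG$. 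You also use without justification that $\phi$ and $\psi_0k$ have ``the same coarse axis'' in the marking complex with uniform constants; having the same invariant foliations gives this in the curve graph, but in the marking complex it is again a claim requiring the Behrstock machinery. The paper avoids all of this by splitting the distance formula: the top-level term is handled in the curve graph, which \emph{is} Gromov hyperbolic and where Bowditch's tight quasi-axes give genuine uniform control (this is how the power $m$ with $d_S(\mu_0,\psi\phi^m(\mu_0))\lmul d_S(\mu_0,\phi(\mu_0))$ is produced); and every proper-subsurface term $[d_R(\mu_0,\xi(\mu_0))]_{2k}$ is bounded by $[d_R(\mu_0,F_+)]_k+[d_R(\xi(\mu_0),F_+)]_k$, which the invariance $\xi\{F_-,F_+\}=\{F_-,F_+\}$ converts into $\sum_R[d_R(\mu_0,F_\pm)]_k$ --- a quantity already bounded above by $\Vert\phi\Vert_\calG$ via the lower bound \eqref{Eq:F}. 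To complete your route you would have to carry out the per-fundamental-domain/Behrstock argument in full with constants tracked; as written, the proof is a plan rather than a proof.
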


\begin{proof}
By \cite{maccarthy} $C(\phi)$ is a virtually cyclic where the degree of the extension is
uniformly bounded, in particular $C(\phi)$ is finitely generated. In fact, if $F_-$ and $F_+$ 
are the stable and unstable measured
foliations associated to $\phi$, then any $\psi \in C(\phi)$ preserves the pair $(F_-, F_+)$ 
as a set. 

To prove the theorem, it is sufficient to show that, for any $\psi \in C(\phi)$ there is
a power $m$ so that $\Vert \psi \phi^m \Vert_\calG \lmul \Vert \phi \Vert_\calG$. 
Indeed, this shows that $C(\phi)$ is generated by $\phi$ and elements in $C(\phi)$
whose word length is less than a multiple of $\Vert \phi \Vert_\calG$.

We use \thmref{Thm:MM} to find such a bound. First, we claim that there exists an integer 
$m$ so that 
\begin{equation} \label{Eq:Move-Back} 
d_S(\mu_0, \psi\phi^m(\mu_0)) \lmul d_S(\mu_0, \phi(\mu_0)).
\end{equation} 
Let $\calB=\calB_\phi$ be the quasi-axis of $\phi$ in the curve graph of $S$, that is 
a geodesic in the curve graph that is preserved by a power $\phi^{m'}$ of
$\phi$ (see \cite{Bowditch08}). Then $\calB$ limits to $F_\pm$ in the boundary of the 
curve graph. And assuming $\calB$ is tight, there are only finitely many such quasi-axes 
and $\phi$ permutes them (again see \cite{Bowditch08}). 
Hence, for some power $m''$, $\psi \phi^{m''}$ also preserves $\calB$. Choose 
$m = m'' + p \cdot  m'$ so that the translation of length $\psi \phi^{m}$ along
$\calB$ is less than or equal to that of $\phi^{m'}$. Both the distance from $\mu_0$ 
to $\calB$ and the translation distance of $\phi^{m'}$ along $\calB$ are bounded by 
the word length of $\phi$. Hence, the claims follows. 

Choose such $m'$ so that the translation length of $\phi^{m'}$ is large enough to
ensure that the geodesic in the curve graph $S$ connecting 
$\mu_0$ to $\phi^{m'}(\mu_0)$ passes near $\calB$ (the curve graph is gromov hyperbolic).
Then for every subsurface $R \subsetneq S$, if $d_R\big(\mu_0, \phi^{m'}(\mu_0) \big)$ 
is large, then either 
\[
d_R\big(\mu_0, F_+ \big)
\qquad\text{or}\qquad
d_R\big(F_-, \phi^{m'}(\mu_0) \big)
\]
is large. That is, there is a constant $k$ so that 
\[
d_R\big(\mu_0, \phi^{m'}(\mu_0) \big)  \geq \min \Big( d_R\big(\mu_0, F_+ \big), 
d_R\big(F_-, \phi^{m'}(\mu_0) \big) \Big)  - k. 
\]
Using $d_R\big(F_-, \phi^{m'}(\mu_0) \big) = d_R\big(F_-, \mu_0\big)$ we get
\[
\Big[ d_R\big(\mu_0, \phi^{m'}(\mu_0) \big) \Big]_{2k}  \gmul 
\Big[d_R\big(\mu_0, F_+ \big) \Big]_k + 
\Big[ d_R\big(F_-, \mu_0 \big) \Big]_k. 
\]
Hence,
\begin{align} \label{Eq:F}
\Vert \phi \Vert_\calG \emul \Vert \phi^{m'} \Vert_\calG 
& \asymp_C \sum_{R \subset S}  \Big[d_R\big(\mu_0, \eta(\mu_0)\big)\Big]_{2k} \notag \\
& \gmul  d_S(\mu_0, \phi^{m'}(\mu_0)) + 
\sum_{R \subsetneq S} \Big[ d_R(\mu_0, F_-) \Big]_k+ 
\sum_{R \subsetneq S} \Big[ d_R(\mu_0, F_+) \Big]_k. 
\end{align}

Now, let $m$ be as in \eqref{Eq:Move-Back} and let $\xi = \psi \phi^m$. 
Further assume $k$ is large enough so that $d_R(F_-, F_+) < k$. Then
\begin{align*} 
\Vert \xi \Vert_\calG &\emul d_S(\mu_0, \xi(\mu_0)) + 
\sum_{R \subsetneq S} \Big[ d_R(\mu_0, \xi(\mu_0) \Big]_{2k}\\
& \lmul d_S(\mu_0, \xi(\mu_0)) + 
\sum_{R \subsetneq S} \Big[ d_R(\mu_0, F_+) \Big]_k
+ \Big[ d_R(\xi(\mu_0), F_+ )\Big]_k
\end{align*}
If $\xi(F_+) = F_+$. 
\[
\sum_{R \subsetneq S} \Big[ d_R(\xi(\mu_0), F_+ )\Big]_k = 
\sum_{R \subsetneq S} \Big[ d_R(\mu_0, F_+ )\Big]_k
\]
and if $\xi(F_+) = F_-$. 
\[
\sum_{R \subsetneq S} \Big[ d_R(\xi(\mu_0), F_+ )\Big]_k = 
\sum_{R \subsetneq S} \Big[ d_R(\mu_0, F_- )\Big]_k
\]
In either case, the last two terms in estimate above given for $\Vert \xi \Vert_\calG$ are
less than the lower-bound \eqnref{Eq:F} given for $\Vert \phi \Vert_\calG$.
We also know from \eqnref{Eq:Move-Back} that the first term is bounded above by $d_S(\mu_0, \phi(\mu_0))$ which is also bounded by above by a multiple of 
$\Vert \phi \Vert_\calG$. The Theorem follows. 
\end{proof}

\subsection{The general case} 

Recall from the Nielsen-Thurston classification of surface homeomorphisms \cite{thurston:GD, flp:TTS} 
that there is a normal form for any homeomorphism $\phi$ of a surface $S$ of finite type. That is, 
\begin{enumerate}
\item There is a multicurve $\Gamma_\phi$ that is preserved by $\phi$, called the 
\emph{canonical reducing system}, defined as follows: consider the set $\calA_\phi$
consisting of all curves $\alpha$ so that $\phi^k(\alpha) = \alpha$ up to isotopy, for some $k>0$ and let 
$\Gamma_\phi$ be the boundary of the subsurface of $S$ that is filled with curves in 
$\calA_\phi$. The curve system $\Gamma_\phi$ is empty if $\phi$ is pseudo-Anosov or 
has finite order. 
\item The components of $S - \Gamma_\phi$ are decomposed into \emph{$\phi$--orbits} 
$\{ V_1, \dots, V_\ell\}$ where 
\[
\phi(V_i) = V_{i+1} \qquad\text{for}\qquad i \in \Z / \ell \, \Z.
\] 
\item For every every $\phi$--orbit $\calV=\{ V_1, \dots, V_\ell\}$ the first return map
$\phi^\ell \from V_1 \to V_1$ is either finite order or pseudo-Anosov. 
\end{enumerate}

It is convenient to fix a topological surface $\delta$ that is homeomorphic to every $V_i$. 
Choosing a homeomorphism $\delta \to V_i$, the map 
\[
\phi^\ell |_{V_i} \from V_i \to V_i
\]
defines a conjugacy class in $\Mod(\delta)$ that is independent on $i$ or the homeomorphism 
from $\delta$ to $V_i$. That is, it depends only on the $\phi$--orbit $\calV$. We
denote this conjugacy class by $[\phi_\calV]$. We say the $\phi$--orbit $\calV$ is 
\emph{of type $\delta$ with the first return map $[\phi_\calV]$}. 

%

%

We start by modifying the generating set $\calG$ and conjugating $\phi$ so
that they are compatible with each other. 

If we choose $\bphi \in [\phi]$ with $\Vert \bphi \Vert_\calG \lmul  \Vert \phi \Vert_\calG$ 
then, by \thmref{Thm:Tao}, the conjugating element $\eta$ satisfies 
$\Vert \eta \Vert_\calG \lmul \Vert \phi \Vert_\calG$. 
But $\eta$ conjugates a generating set for $C(\bphi)$ to a generating
set for $C(\phi)$ which means it would be enough to prove the theorem for $\bphi$. 
Our goal is to find a representative of the conjugacy class $[\phi]$ of $\phi$ which has 
(as much as possible) a standard form.
\medskip

There are finitely many topological types possible for subsurfaces of $S$. 
Let $\Delta$ be the set of surfaces that can be a subsurface of $S$. That is, 
for every subsurface $V$ of $S$, there is a (unique) surface $\delta \in \Delta$
that is homeomorphic to $V$. We fix a generating set $\calG_\delta$ for every surface 
$\delta \in \Delta$. 
In fact, we assume $\calG_\delta$ consists of Dehn twists around a finite set of 
curves $\mu_\delta$. Curves in $\mu_\delta$ fill the surface $\delta$, that is 
every curve in $\delta$ intersects a curve in $\mu_\delta$.

Also, up to a homeomorphism, there are finitely many multicurves on a surface $S$. 
Let $\Lambda$ be a fixed set consisting of a representative for every 
homeomorphism type of a multicurve in $S$. For any simple closed curve
$\gamma$ in $S$, let $D_\gamma$ denote the Dehn twist around $\gamma$.  
For each $\Gamma \in \Lambda$ let $\mu_\Gamma$ 
be a set of curves on $S$ with the following properties. 
\begin{enumerate}
\item $\Gamma \subset \mu_\Gamma$. 
\item The set $\calG_\Gamma =\{ D_\gamma \, \vert\, \gamma \in \mu_\Gamma\}$ 
generates $\Mod(S)$. 
\item for every subsurface $V$ that is a component of $S - \Gamma$ that is homeomorphic
to $\delta$, there is a homeomorphism $m_V \from \delta \to V$ so that 
$m_V(\mu_\delta)$ is exactly the set of curves in $\mu_\Gamma$ that are contained in $V$. 
In particular, $m_V(\calG_\delta) \subset \calG_\Gamma$ generates $\Mod(V)$. 
\end{enumerate}
Note that, $\mu_\Gamma$ fills $S$. For the rest of this article, we assume 
\[
\calG_0 = \bigcup_{\Gamma \in \Lambda} \calG_\Gamma
\qquad\text{and}\qquad 
\mu_0 = \bigcup_{\Gamma \in \Lambda} \mu_\Gamma. 
\]
Note that $\Vert \param \Vert_\calG$ differs from $\Vert \param \Vert_{\calG_0}$ 
by a uniform multiplicative amount. 

We are now ready to construct $\bphi$. Let $\Gamma \in \Lambda$ be the curve
system in $\Lambda$ that has the same homeomorphism type as $\Gamma_\phi$.
Conjugate $\phi$ to $\phi'$ by a homeomorphism that sends $\Gamma_\phi$ to 
$\Gamma$. Then, $\phi'$ partitions the components of $S - \Gamma$ to $\phi'$--orbits 
similar to $\phi$. We then further modify $\phi'$ to $\bphi$ whose orbits are the same
as the orbits of $\phi'$ so that, if $\calV'=\{ V_1', \dots, V_\ell'\}$ is a $\phi'$--orbit of size $\ell$ 
associated to the $\phi$--orbit $\calV$, then
\begin{enumerate}
\item for $i = 1, \ldots, \ell-1$, we have 
\[
m_{V_{i+1}}^{-1}\, \bphi \,  m^{\ }_{V_1} \from \delta \to \delta 
\qquad \text{is the identity map} 
\]
\item The map 
\[
m_{V_1}^{-1} \, \bphi \,  m^{\ }_{V_\ell} \from \delta \to \delta 
\]
is the representative of $[\phi_\calV]$ that has the shortest word length with respect to 
$\calG_\delta$. We can make this canonical by choosing, ahead of time, a representative 
for every conjugacy class in $\Mod(\delta)$. 
\end{enumerate} 

\begin{proposition} 
For $\bphi$ constructed as above, we have
\[
\Vert \bphi \Vert_{\calG_0} \lmul  \Vert \phi \Vert_\calG
\]
\end{proposition}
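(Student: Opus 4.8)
The plan is to estimate $\Vert \bphi \Vert_{\calG_0}$ via the Masur--Minsky distance formula (\thmref{Thm:MM}), by controlling all subsurface projections $d_R(\mu_0, \bphi(\mu_0))$ in terms of the corresponding projections for $\phi$. Since $\bphi$ is conjugate to $\phi$ by a homeomorphism (first to $\phi'$, moving $\Gamma_\phi$ to $\Gamma$, then the further modification to $\bphi$ within the orbits), and conjugation by a fixed homeomorphism changes word length only up to a uniform multiplicative constant, it suffices to compare $\bphi$ with $\phi'$; and for $\phi'$ the quantity $\Vert\phi'\Vert_{\calG_0}\emul\Vert\phi\Vert_\calG$ is immediate. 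So the real content is: modifying $\phi'$ inside each $\phi'$--orbit to make the non-return maps the identity and the return map the shortest representative of $[\phi_\calV]$ does not increase the word length by more than a uniform multiplicative factor.

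First I would partition the subsurfaces $R\subset S$ into three classes: (a) $R$ disjoint from (or equal to a component of the complement of) $\Gamma$; more precisely $R$ contained in some orbit component $V_i'$; (b) $R$ that overlaps $\Gamma$ (i.e. $\partial R$ and $\Gamma$ have positive geometric intersection), including $R=S$ itself; and (c) annular neighborhoods of curves of $\Gamma$. For class (b) and (c), the key observation is that $\bphi$ and $\phi'$ agree ``up to bounded data'' on the relevant curves: both preserve $\Gamma$ with the same combinatorics on the components, the modification lives entirely inside the $V_i'$, and $\mu_0\cap V_i' = m_{V_i'}(\mu_\delta)$ fills $V_i'$. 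For an annular $R$ around a curve of $\Gamma$, the twisting of $\bphi(\mu_0)$ about that curve differs from that of $\phi'(\mu_0)$ by a bounded amount (the modification maps are supported away from $\Gamma$ and only re-glue by elements that are, by construction, controlled — here one uses that the return map chosen is the shortest representative, whose restriction near the boundary contributes boundedly, and that the identity non-return maps contribute nothing). For class (b), since $\partial R$ crosses $\Gamma$, the projection $d_R(\mu_0,\bphi(\mu_0))$ is comparable to $d_R(\mu_0,\phi'(\mu_0))$ up to a bounded additive error, because changing a marking by a homeomorphism supported on subsurfaces disjoint from $\partial R$ changes its projection to $R$ by at most a constant (this is the standard ``Behrstock inequality''/Lipschitz property of subsurface projections).

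The main work — and the step I expect to be the genuine obstacle — is class (a): subsurfaces $R\subset V_i'$. Here $\bphi$ maps $V_i'$ to $V_{i+1}'$, so $d_R(\mu_0,\bphi(\mu_0))$ sees the composition of the identification maps $m_{V_j'}$ around the cycle together with one copy of the shortest representative of $[\phi_\calV]$; for $\phi'$ one sees instead the original $\phi'|$-maps around the cycle. The point is that $\bphi^\ell|_{V_1'}$ is conjugate in $\Mod(V_1')\cong\Mod(\delta)$ to $\phi'^\ell|_{V_1'}$, namely to a representative of $[\phi_\calV]$, and we have \emph{chosen} the shortest such representative; so by \thmref{Thm:MM} applied inside $\delta$ (with the generating set $\calG_\delta$ and the filling marking $\mu_\delta$), $\Vert \bphi^\ell|_{V_1'}\Vert_{\calG_\delta}\le\Vert\phi'^\ell|_{V_1'}\Vert_{\calG_\delta}\lmul\Vert\phi\Vert_\calG$. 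One then has to pass from a bound on the return map $\bphi^\ell$ to a bound on the individual $\bphi|_{V_i'}$: since by construction all but one of these are the identity (in the chosen coordinates), the only nontrivial one is the return map itself, so the per-step word lengths inside the $V_i'$ are all bounded by $\Vert \bphi^\ell|_{V_1'}\Vert_{\calG_\delta}$, and summing over the at most $\ell\le\dim$-many pieces and over the finitely many orbits costs only a uniform factor. Assembling the three classes and feeding back into \thmref{Thm:MM} for $S$ gives $\Vert\bphi\Vert_{\calG_0}\lmul\Vert\phi\Vert_\calG$. The subtlety to be careful about is exactly the bookkeeping of the gluing/identification homeomorphisms $m_V$ across $\Gamma$: one must confirm that replacing $\phi'$'s gluing maps by the $m_V$'s is a modification that changes projections to annuli around $\Gamma$ and to subsurfaces overlapping $\Gamma$ by only bounded amounts, which is where the assumption that $\mu_\Gamma$ restricts to $m_V(\mu_\delta)$ on each component is used essentially.
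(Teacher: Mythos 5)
Your overall architecture --- the Masur--Minsky distance formula, a partition of subsurfaces into those crossing $\Gamma$, annuli about $\Gamma$, and those contained in complementary components, with the minimality of the chosen return-map representative carrying the weight on the last class --- is the same as the paper's. But the reduction you open with has a genuine gap that makes the argument circular. You assert that ``$\Vert\phi'\Vert_{\calG_0}\emul\Vert\phi\Vert_\calG$ is immediate'' for the intermediate conjugate $\phi'=h\phi h^{-1}$, where $h$ carries $\Gamma_\phi$ to the standard multicurve $\Gamma$, on the grounds that conjugation by a fixed homeomorphism changes word length by a uniform factor. The homeomorphism $h$ is not fixed: it depends on $\phi$, and its word length must be at least comparable to the complexity of $\Gamma_\phi$, which can be of order $\Vert\phi\Vert_\calG$ and is in no way uniformly bounded. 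Conjugation by $h$ changes word length by roughly $2\Vert h\Vert_\calG$ additively, so no uniform constant survives. Worse, $\phi'$ is only determined up to precomposing $h$ with an arbitrary mapping class preserving $\Gamma$; a bad choice makes $\Vert\phi'\Vert_{\calG_0}$ arbitrarily large, and the assertion that \emph{some} choice satisfies $\Vert\phi'\Vert_{\calG_0}\lmul\Vert\phi\Vert_\calG$ is essentially the proposition being proved. Everything downstream that is anchored to $\phi'$ --- the bounded-difference claim for annular projections at $\Gamma$ in your class (c), and the bound $\Vert\phi'^\ell|_{V_1'}\Vert_{\calG_\delta}\lmul\Vert\phi\Vert_\calG$ in class (a) --- inherits this gap.

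The paper's proof avoids $\phi'$ entirely in the estimate and compares directly with $\phi$. It applies the distance formula to $\bphi$: for $R$ overlapping $\Gamma$ the terms are $O(1)$ because $\Gamma\subset\mu_0$ and $\bphi(\Gamma)=\Gamma$, hence $d_R(\mu_0,\bphi(\mu_0))\eadd d_R(\Gamma,\bphi(\Gamma))=O(1)$ and these terms are killed by the threshold; terms for $R$ inside $V_2,\dots,V_\ell$ vanish because $\bphi$ is the identity there in the $m_V$-coordinates and $\mu_0$ restricts to $m_V(\mu_\delta)$; and the terms inside $V_1$ are $\emul\Vert\phi_\calV\Vert_{\calG_\delta}$, which by minimality of the chosen representative is at most $\min_\mu\sum_{\rho\subset\delta}[d_\rho(\mu,\phi_\calV(\mu))]_k$, and this is transported via $m_{V_1}$ into $S$ and dominated by $\sum_{R\subset S}[d_R(\mu_0,\phi^\ell(\mu_0))]_k\emul\Vert\phi^\ell\Vert\leq\ell\,\Vert\phi\Vert$ --- exactly the mechanism of Proposition~\ref{Prop:subsurface}. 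If you replace each appeal to $\phi'$ by these direct comparisons to $\phi$ itself, your partition of subsurfaces yields the paper's argument.
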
 

\begin{proof} 
The proposition follows from the Masur-Minsky distance formula (\thmref{Thm:MM}), which we
apply to $\bphi$. Since $\bphi (\Gamma) = \Gamma$, for every $R$ that
intersects $\Gamma$, we have 
\[
d_R(\mu, \bphi(\mu)) \eadd d_R(\Gamma, \bphi(\Gamma)) = O(1). 
\]
Hence, choosing $k$ large enough, these terms disappear from the distance formula. 

Also, for every $\bphi$--orbit, $\calV'$ associated to the $\phi$--orbit $\calV$, 
we have,
\begin{align*} 
\big\Vert \bphi^\ell\big|_\calV \big\Vert_{\calG_\delta} 
 &\emul \sum_{\rho \subset \delta} \Big[ d_\rho(\mu_\delta, \phi_\calV(\mu_\delta))\Big]_k\\
 &\emul \min_{\mu \in M_\delta} 
     \sum_{\rho \subset \delta} \Big[ d_\rho(\mu, \phi_\calV(\mu))\Big]_k\\
 & = \min_{\mu \in M_{V_1}} 
     \sum_{R \subset V_1} \Big[ d_R(\mu, \phi^\ell(\mu))\Big]_k\\ 
 &\lmul \sum_{R \subset V_1} \Big[ d_R(\mu_0, \phi^\ell(\mu_0))\Big]_k\\
 &\lmul \sum_{R \subset S} \Big[ d_R(\mu_0, \phi^\ell(\mu_0))\Big]_k 
   \emul \Vert \phi^\ell \Vert_{\calG_0} \leq \ell  \Vert \phi \Vert_{\calG_0}
\end{align*}

%
%
%
%

Now, let $\mu_S$ be a marking for $S$ associated to the generating set $\calG_S$
and let $\mu_0$ be the marking in $\delta$ that is the image of the projection of 
$\mu_S$ to $V_1$ under $m_{V_1}^{-1}$. That is, for every sub-surface $R$ of $\delta$,
we have
\[
d_{m_{V_1}\!(R)}\big(\mu_S, m_{V_1}(\mu_0)\big) = O(1). 
\]
We can now compare the word length of $\phi_\calV$ with that of $\phi^\ell$
which send $V_1$ to itself. 
\begin{align*}
\Vert \phi_\calV \Vert_{\calG_\delta} 
  & \prec \sum_{R \subset \delta}  \Big[d_R\big(\mu_0, \phi_\calV(\mu_0)\big)\Big]_k\\
  & \emul \sum_{R \subset V_1}  \Big[d_R\big(\mu_S, \phi_\calV(\mu_S)\big)\Big]_k\\
  & \prec \sum_{R \subset S}  \Big[d_R\big(\mu_S, \phi^\ell(\mu_S)\big)\Big]_k
   \asymp \Vert \phi^\ell \Vert_{\calG_S}
\end{align*} 
where the last inequality is the distance formula in the surface $S$. The lemma
now follows since $\Vert \phi^\ell \Vert_{\calG_S} \leq \ell \, \Vert \phi \Vert_{\calG_S}$ and
$\ell$ and all other related constant are independent of $\phi$. 
\end{proof}

\begin{proposition} \label{Prop:subsurface} 
There is a constant $M_\delta$, depending only on the generating sets 
$\calG_\delta$ and $\calG$, so that for every $\phi \in \Mod(S)$ that has a $\phi$ orbit 
$\calV =\{ V_1, \dots, V_\ell\}$ of type $\delta$, with the first return map 
$\phi_\calV$, we have 
\[
\Vert \phi_\calV \Vert_{\calG_\delta} \leq M_\delta \, \Vert \phi \Vert_\calG. 
\]
\end{proposition}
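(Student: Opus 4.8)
The plan is to deduce this directly from the Masur--Minsky distance formula (\thmref{Thm:MM}), applied twice---once on the model surface $\delta$ and once on $S$---together with the topological invariance of subsurface projections. This is in fact precisely the estimate carried out inside the proof of the preceding proposition, and I would reorganize it as a free-standing argument as follows. Fix a marking $\mu_S$ of $S$, let $\ell$ be the size of the orbit $\calV$, and choose any homeomorphism $m_{V_1}\from\delta\to V_1$. Since $\phi^\ell$ sends $V_1$ to itself it restricts to an element of $\Mod(V_1)$, so $\psi := m_{V_1}^{-1}\circ\phi^\ell|_{V_1}\circ m_{V_1}$ represents the conjugacy class $[\phi_\calV]$; as $\phi_\calV$ is by definition a shortest representative of this class, $\Vert\phi_\calV\Vert_{\calG_\delta}\le\Vert\psi\Vert_{\calG_\delta}$. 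Take the marking $\mu_0$ of $\delta$ to be $m_{V_1}^{-1}$ applied to the subsurface projection of $\mu_S$ to $V_1$, so that $d_{m_{V_1}(R)}\bigl(\mu_S, m_{V_1}(\mu_0)\bigr)=O(1)$ for every subsurface $R\subset\delta$.

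First I would apply \thmref{Thm:MM} on the surface $\delta$ (with a large threshold $k$) and rewrite each projection coordinate through the homeomorphism $m_{V_1}$, using $m_{V_1}\circ\psi=\phi^\ell\circ m_{V_1}$:
\[
\Vert\phi_\calV\Vert_{\calG_\delta}\le\Vert\psi\Vert_{\calG_\delta}
\asymp_C \sum_{R\subset\delta}\Big[d_R\bigl(\mu_0,\psi(\mu_0)\bigr)\Big]_k
= \sum_{R\subset\delta}\Big[d_{m_{V_1}(R)}\bigl(m_{V_1}(\mu_0),\phi^\ell(m_{V_1}(\mu_0))\bigr)\Big]_k .
\]
Because $\phi^\ell$ permutes the subsurfaces of $V_1$ and $d_{m_{V_1}(R')}(\mu_S,m_{V_1}(\mu_0))=O(1)$ for all $R'\subset\delta$, the triangle inequality for subsurface projections gives $d_{m_{V_1}(R)}(m_{V_1}(\mu_0),\phi^\ell m_{V_1}(\mu_0))\eadd d_{m_{V_1}(R)}(\mu_S,\phi^\ell(\mu_S))$; as $R$ runs over subsurfaces of $\delta$ its image $m_{V_1}(R)$ runs over subsurfaces of $V_1$, and all truncated terms are nonnegative, so (after the routine adjustment of the threshold)
\[
\Vert\phi_\calV\Vert_{\calG_\delta}\lmul
\sum_{R\subset V_1}\Big[d_R\bigl(\mu_S,\phi^\ell(\mu_S)\bigr)\Big]_k
\le \sum_{R\subset S}\Big[d_R\bigl(\mu_S,\phi^\ell(\mu_S)\bigr)\Big]_k
\asymp_C \Vert\phi^\ell\Vert_\calG
\le \ell\,\Vert\phi\Vert_\calG ,
\]
where the last $\asymp_C$ is \thmref{Thm:MM} on $S$.

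Finally I would observe that $\ell$ is at most the number of complementary components of a multicurve in $S$, hence bounded by a constant depending only on $S$, so that all multiplicative constants collected above depend only on $S$, $\calG_\delta$ and $\calG$; the additive constants in the $\asymp_C$--relations are absorbed since $\Vert\phi\Vert_\calG\ge 1$ unless $\phi$ is the identity, and when $\phi$ is the identity the statement is trivial. This yields $\Vert\phi_\calV\Vert_{\calG_\delta}\le M_\delta\Vert\phi\Vert_\calG$. The only delicate point---and the step I would be most careful about---is the passage from the distance formula on $\delta$ to a partial sum of the distance formula on $S$: one must check both that dropping the subsurfaces of $S$ not contained in $V_1$ is legitimate (it is, since only an upper bound is needed and every truncated term is $\ge 0$) and that replacing the model marking by the pulled-back projection of $\mu_S$ perturbs each projection coordinate by only $O(1)$, which is the defining property of $\mu_0$ and relies on the coarse idempotence of subsurface projection.
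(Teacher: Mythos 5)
Your proof is correct and follows essentially the same route as the paper's: both use the minimality of $\phi_\calV$ in its conjugacy class to bound $\Vert\phi_\calV\Vert_{\calG_\delta}$ by the Masur--Minsky sum on $\delta$ for the marking pulled back from the projection of $\mu_S$ to $V_1$, compare that sum to the partial sum of the distance formula on $S$ over subsurfaces of $V_1$, and conclude via $\Vert\phi^\ell\Vert_\calG\leq\ell\,\Vert\phi\Vert_\calG$ with $\ell$ uniformly bounded. The only difference is presentational: you introduce the explicit representative $\psi=m_{V_1}^{-1}\phi^\ell|_{V_1}m_{V_1}$ and spell out the $O(1)$ transfer of projection coordinates, where the paper compresses the same step into the remark that changing the conjugator amounts to changing the marking.
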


\begin{proof} 
If $\phi_\calV$ is finite order, then the statement is clear since there are
only finitely many conjugacy classes of finite order elements and $\phi_\calV$ is
one of the finitely many fixed representatives of these classes. 
Hence, $\Vert \phi_\calV \Vert_{\calG_\delta}$ is uniformly bounded. However, we give 
a general argument that works in both cases using the Masur-Minsky 
distance formula Therorem~\ref{Thm:MM}. 


Note that the changing $\eta$ by a conjugation is the same as changing the marking
$\mu_\delta$. Since we have chosen $\phi_\calV$ to be the representative
$[\phi_\calV]$ with the smallest word length, we have,
\[
\Vert \phi_\calV \Vert_{\calG_\delta}
\prec \sum_{R \subset \delta}  \Big[d_R\big(\mu, \phi_\calV(\mu)\big)\Big]_k
\qquad \text{ for any marking $\mu$}. 
\]
Here, $\prec$ means less than up a uniform multiplicative and additive error. 

Now, let $\mu_S$ be a marking for $S$ associated to the generating set $\calG_S$
and let $\mu_0$ be the marking in $\delta$ that is the image of the projection of 
$\mu_S$ to $V_1$ under $m_{V_1}^{-1}$. That is, for every sub-surface $R$ of $\delta$,
we have
\[
d_{m_{V_1}\!(R)}\big(\mu_S, m_{V_1}(\mu_0)\big) = O(1). 
\]
We can now compare the word length of $\phi_\calV$ with that of $\phi^\ell$
which send $V_1$ to itself. 
\begin{align*}
\Vert \phi_\calV \Vert_{\calG_\delta} 
  & \prec \sum_{R \subset \delta}  \Big[d_R\big(\mu_0, \phi_\calV(\mu_0)\big)\Big]_k\\
  & \emul \sum_{R \subset V_1}  \Big[d_R\big(\mu_S, \phi_\calV(\mu_S)\big)\Big]_k\\
  & \prec \sum_{R \subset S}  \Big[d_R\big(\mu_S, \phi^\ell(\mu_S)\big)\Big]_k
   \asymp \Vert \phi^\ell \Vert_{\calG_S}
\end{align*} 
where the last inequality is the distance formula in the surface $S$. The lemma
now follows since $\Vert \phi^\ell \Vert_{\calG_S} \leq \ell \, \Vert \phi \Vert_{\calG_S}$ and
$\ell$ and all other related constant are independent of $\phi$. 
\end{proof}

Now, consider an element $\psi \in C(\phi)$. First notice that if $\alpha \in \calA_\phi$ then 
\[
\phi^k (\psi(\alpha)) = \psi(\phi^k(\alpha)) = \psi(\alpha),
\]
which means $\psi(\calA_\phi)=\calA_\phi$. Hence, $\psi$ also preserves the subsurface 
that is filled with the curves in $\calA_\phi$. Therefore, 
$\psi(\Gamma_\phi) = \Gamma_\phi$ and $\psi$ permutes the components of 
$S - \Gamma_\phi$. 

Assume, $\psi(V_1) = W_1$ where $V_1$ is in a $\phi$--orbit $\calV$ and $W_1$ is 
in a $\phi$--orbit $\calW$. We observe that $\calV$ and $\calW$ are of the same type 
$\delta$. Also, for every $i$, 
\[
\psi(V_i) = \psi( \phi^i (V_1) ) = \phi^i(\psi(V_1))= \phi^i(W_1)= W_i.
\]
Hence, the orbit $\calV$ is mapped to the $\calW$, which in particular implies 
$|\calV| = |\calW|=\ell$. We further have
\[
\psi|_{V_1} \, \phi|_{V_1}^\ell \, \psi|_{V_1}^{-1}= \phi|_{W_1}^\ell, 
\]
where $\psi|_{V_1}$ is the restriction of $\psi$ to $V_1$ (similarly, $\phi|_{V_1}$ 
and $\phi|_{W_1}$ are restrictions of $\phi$ to $V_1$ and $W_1$ respectively). 
That is, $[\phi_\calV]= [\phi_\calW]$ which implies $\phi_\calV=\phi_\calW$.

To sum up, $\psi$ induces a permutation of $\phi$--orbits, however, it can only send an 
orbit to another orbit if the orbits have the same size, same topological type and if the
associated first return maps are the same.  Also, $\psi$ has to send adjacent 
components of $S - \Gamma_\phi$ to adjacent components. To keep track of this 
information, we
consider the a decorated dual graph defined as follows. Let $G$ be a graph whose vertices 
are components of $S - \Gamma_\phi$ and edges are pairs of adjacent components. 
We decorate a vertex $V \in G$ (which is a component of $S - \Gamma_\phi$) 
with the name $\calV$ of the associated $\phi$--orbit, 
the topological type $\delta$ and the first return map $\phi_\calV \in \Mod(\delta)$. 
We say a map $f \from G \to G$ is an automorphism of the decorated graph if
\begin{enumerate}
\item $f$ is a graph automorphism. 
\item  There is a permutation $\sigma$ of the $\phi$--orbits so that, if 
\[
f(V, \calV, \delta, \phi_\calV)= (W, \calW, \delta', \phi_\calW)
\]
then $\delta = \delta'$, $|\calV|= |\calW|$, $\calW = \sigma(\calV)$ and 
$\phi_\calV = \phi_\calW$. 
\end{enumerate}
Note that, the set of automorphisms of the decorated graph $G$ form a group which we denote
by $\Aut(G)$. For a given
$\psi \in C(\phi)$, we denoted the induced graph map by $f_\psi$ and the induced permutation
of $\phi$--orbits by $\sigma_\psi$. We have a homomorphism 
\[
\pi_G \from C(\phi) \to \Aut(G)
\]
projecting $\psi$ to the induced action $f_\psi$ on the decorated graph $G$.

For each orbit $\calV$, let 
\[
C_\calV \subset \ker(\pi_G)
\]
be the set of elements of $C(\phi)$ that fix every subsurface in $S -\Gamma_\phi$ 
whose restriction to any subsurface that is not in $\calV$ is identity. Then 
the $\cup_\calV C_\calV$ generates $\ker(\pi_G)$ and the intersection 
$\cap_\calV C_\calV$ is the set of multi-twists around the curves $\Gamma$. 
Also, for $\psi \in C_\calV$ where $\calV=\{V_1, \dots, V_\ell\}$, 
\[
\psi|_{V_{i+1}} = \phi^{-i} \psi|_{V_1} \phi^{i}. 
\]
That is, the restriction $\psi|_{V_1}$ determines the restriction of $\phi$
to every other subsurface in the $\phi$--orbit $\calV$. Therefore, considering
the homeomorphism 
\[
\pi_\calV \from C_\calV \to \Mod(\delta), 
\qquad \psi \to   m_{V_1}^{-1} \psi|_{V_1}\, m_{V_1}
\]
we have that any $\psi \in C_\calV$ is determined, up to possibly a multi-twist around 
$\Gamma$, by its projection $\pi_\calV(\phi)$ to $\Mod(\delta)$ which lies in 
$C(\phi_\calV)$. 

We have shown that every element of $C(\phi)$ is determined, up a multi-twist around $
\Gamma$, by its projection to $\Aut(G)$ and to $C(\phi_\calV)$. 
We now examine which multi-twists around curves in $\Gamma$ lie in $C(\phi)$. 
Consider the action of 
$\phi$ on $\Gamma$. It decomposes $\Gamma$ into orbits
$\bgamma = \{\gamma_1, \dots, \gamma_j \}$ where 
$\phi(\gamma_i) = \gamma_{i+1}$ for $i \in \Z/j\Z$. We call such orbit
$\bgamma$ an admissible multicurve if $\phi^{\, j}$ sends $\gamma_1$ to
$\gamma_1$ preserving the orientation. For any admissible multicurve 
$\bgamma$, define
\[
D_\bgamma = D_{\gamma_1} \dots D_{\gamma_j},
\]
where $D_{\gamma_i}$ is a Dehn twist if $\gamma_i$ is non-separating
and a half-twist if $\gamma_i$ is separating. That is, $D_\bgamma$ is the product 
of Dehn twists (or half-twists) around the curves in $\bgamma$. 
The set of multi-twists around the curve in $\Gamma$ that commute with 
$\phi$ is generated by $\{D_\bgamma \}_\bgamma$. Note that this maybe
an empty set. 
This is because, if an element $\psi \in C(\phi)$ twists around $\gamma$
is also has to twist by the same amount around $\phi(\gamma)$. However, 
if $\phi^{\, j}$ sends $\gamma$ to itself reserving the orientation, then 
$\phi^{\, j}$ conjugates $D_\gamma$ to $D_\gamma^{-1}$. Hence, 
$D_\gamma$ does not commute with $\phi$ and no Dehn twists around
such $\gamma$ is possible. 

There is no homomorphism back from $\Aut(G)$ or $C(\phi_\calV)$
to $\Mod(S)$. But to find a generating set for $C(\phi)$ it is enough to choose a section. 
To summarize the above discussion, we have shown:

\subsection*{Summary} Let $\calG_\calV$ be a generating set for $C(\phi_\calV)$ and
consider arbitrary sections 
\[
\pi_G^{-1} \from \Aut(G) \rightsquigarrow C(\phi)
\qquad\text{and}\qquad
\pi_\calV^{-1} \from C(\phi_\calV) \rightsquigarrow C_\calV. 
\]
Then $C(\phi)$ is generated by the union of the following sets:
\begin{enumerate}
\item The set $\{D_\bgamma, \gamma \in \Gamma\}$,  where
$\bgamma \subset \Gamma$ is an admissible multicurve. 
\item The image of $\Aut(G)$ under $\pi_G^{-1}$ 
\item The images of $\calG_\calV$ under maps $\pi_\calV^{-1}$.
\end{enumerate}

What remains is to bound the word length of the elements of this
generating set. An element in (1) is a product of Dehn twists around 
a uniformly bounded number of curves and these Dehn twists are already in 
our generating set. For (2), we build the section to be as close to the identity as possible. 
Namely, for any $f \in \Aut(G)$ and induced permutation $\sigma$,  let
$\sigma(V_i) = W_j$ where $V_i$ in the $\phi$-orbit $\calV$ and $W_j$ in 
a $\phi$--orbit $\calW$. We define $\psi$ to be the map that also sends $V_i$ to 
$W_j$ and so that $m_{W_j}^{-1}\psi m_{V_i}$ is the identity. Then $\psi$
is clearly in $C(\phi)$. For (3), given an element $g \in \calG_\calV$ associated to 
a $\phi$-orbit $\calV= \{V_1, \dots, V_\ell\}$ there is mapping class $\psi$, that acts on
subsurfaces of $S - \calA_\phi$ the same way as $\phi$, its restriction to $\calV$
is the same as $\phi$ and is the identity on every other orbit. Again, $\psi$ clearly
commutes with $\phi$. The desired upper-bound for the word length of $\psi$ follows 
from \propref{Prop:subsurface}.

\section{Self-equivalences of Thurston maps}

The results of the previous section imply the following theorem.

\begin{theorem}
\label{thm-centralizer}
Let $f$ be either a Thurston map with empty canonical obstruction or a homeomorphism. Then the group $C(f)$ of all self-equivalences of $f$ is finitely generated. Moreover, there is an algorithm that finds a generating set for $C(f)$.
\end{theorem}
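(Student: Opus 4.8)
The plan is to realize $C(f)$ as a subgroup of the mapping class group of $S^2\setminus Q_f$ and then invoke the results of the previous section together with Thurston rigidity. An element of $C(f)$ is represented by an equivalence pair $(\phi_0,\phi_1)$ from $f$ to itself; by the definition of Thurston equivalence, $\phi_0$ and $\phi_1$ agree on $Q_f$ and are isotopic rel $Q_f$, hence define a single mapping class $g\in\Mod(S^2\setminus Q_f)$. First I would verify that
\[
\Phi \from C(f) \longrightarrow \Mod(S^2\setminus Q_f), \qquad (\phi_0,\phi_1)\longmapsto [\phi_1]
\]
is a well-defined injective homomorphism. The only subtle point is injectivity: if $\phi_0$ is isotopic to the identity rel $Q_f$, then, because a lift of a homeomorphism through $f$ is unique up to isotopy rel $f^{-1}(Q_f)$, the isotopy of $\phi_0$ to the identity lifts compatibly with the commuting square to an isotopy of $\phi_1$ to the identity, so $(\phi_0,\phi_1)$ represents the trivial element of $C(f)$. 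Thus $C(f)\cong\mathrm{Image}(\Phi)$, and it remains to identify and bound this image.

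If $f$ is a homeomorphism the identification is immediate: writing $g=[\phi_0]=[\phi_1]$, the relation $\phi_0\,f=f\,\phi_1$ becomes $g\,[f]=[f]\,g$ in $\Mod(S^2\setminus Q_f)$, so $\mathrm{Image}(\Phi)$ is exactly the centralizer $C\big([f]\big)$. By \corref{cor:mcalg}, whose proof rests on the word-length bound of \thmref{thm:mcgc}, this centralizer is finitely generated and an explicit generating set is computable, which is the theorem in this case.

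If $f$ has empty canonical obstruction I would argue according to the type of its orbifold. When the orbifold is hyperbolic, emptiness of the canonical obstruction means $f$ carries no Thurston obstruction, so by Thurston's Theorem (\cite{DH}) $f$ is equivalent to a rational map $g$, the realizing equivalence being unique up to post-composition by a M\"obius automorphism of $g$; transporting $\Phi$ through a fixed such equivalence identifies $\mathrm{Image}(\Phi)$ with the group of M\"obius automorphisms of $g$ preserving $Q_g$, which is finite since $|Q_g|\ge 3$. When the orbifold is parabolic, emptiness of the obstruction rules out the degenerate Levy cycle alternative of \thmref{th:degenerateLevy}, so $f$ is equivalent to a quotient of an affine map, unique up to affine conjugacy, and $\mathrm{Image}(\Phi)$ is then identified with the group of affine symmetries commuting with the linear part of the model; this group is computable but can be infinite — already for some $(2,2,2,2)$ Latt\`es maps it contains a finite-index subgroup of $\SL_2(\Z)$ — and I would either analyze it directly from the linear model or re-present $\mathrm{Image}(\Phi)$ as a centralizer in $\Mod(S^2\setminus Q_f)$ and invoke \thmref{thm:mcgc} again. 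In each sub-case $C(f)$ is finitely generated, and since the algorithm of \cite{SY} produces the rational or affine model together with the realizing equivalence, a generating set can be output: compute the automorphisms of the model by solving the relevant polynomial or linear equations, then transport them back through the equivalence.

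I expect the main obstacle to be the bookkeeping around $\Phi$ — checking that it is well defined on the quotient defining $C(f)$ and injective (the lift-uniqueness lemma, with attention to small $|Q_f|$, to permutations of $Q_f$, and to maps with extra symmetry) — and, in the parabolic case, the precise match between $\mathrm{Image}(\Phi)$ and the affine-symmetry group, together with confirming that \thmref{thm:mcgc}, stated for a surface of finite type, applies verbatim to the punctured sphere $S^2\setminus Q_f$ after that identification. Granting those identifications, the passage from ``finitely generated'' to ``a generating set is output by an algorithm'' is routine: \corref{cor:mcalg} in the homeomorphism case, and in the empty-obstruction cases a finite computation with the model produced by \cite{SY}.
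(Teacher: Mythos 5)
Your proposal matches the paper's proof in all essentials: the same three-way case split (homeomorphism handled by Corollary~\ref{cor:mcalg}, unobstructed hyperbolic orbifold by Thurston rigidity, parabolic orbifold by the affine model of \cite{SY} together with a linear-algebra computation of the symmetries commuting with the linear part $A$), with your explicit construction of the embedding $\Phi$ into $\Mod(S^2\setminus Q_f)$ being bookkeeping the paper leaves implicit. The only cosmetic differences are that the paper asserts triviality (rather than finiteness) of $C(f)$ in the hyperbolic case, and that it reduces the case $Q_f\supsetneq P_f$ to $C(f,P_f)$ by a finite-index argument which you gesture at but do not spell out.
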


\begin{proof}

I. If $f$ is an unobstructed Thurston map with   hyperbolic orbifold then it is equivalent to a rational map (possibly with extra marking) and $C(f)$ is trivial (cf. \cite{DH,BGL}).  The same argument applies in the case of a Thurston map with  parabolic orbifold unless $P_f$ contains exactly $4$ points, and $f$ is equivalent to a quotient of an affine map $Ax+b$ with hyperbolic associated matrix $A$ (see \cite{SY}). 

II. Let $f$ be a Thurston map with   parabolic orbifold  such that $C(f)$ is non-trivial.   If $Q_f=P_f$ contains exactly 4 points, then the pure mapping class group of $(S^2, Q_f)$ is isomorphic to the modular group $\Lambda=\text{PGL}(2,\Z) / \text{PGL}(2,\Z/2\Z)$. In this case, $C(f)$ is the subgroup of $\Lambda$ of all matrices that commute with $A$. It consists of the matrices which diagonalize simultaneously with $A$, and thus its generating set can be easily computed. 

If $Q_f$ has more than 4 points, let us denote $C(f,P_f)$ the group of self-equivalences of $f$ with only the points in $P_f$ marked. Clearly,  $C(f)$ is isomorphic to a finite index subgroup of $C(f,P_f)$. Indeed, if a self-equivalence $\phi$ is homotopic to the identity in $(S^2, Q_f)$ it will also be homotopic to the identity in $(S^2, P_f)$. Therefore every self-equivalence can be represented by an affine homeomorphism. Some elements of $C(f,P_f)$, however, may have affine representatives that do not fix points in $Q_f$ but instead send them to different pre-periodic orbits. Determining which subgroup of $C(f,P_f)$ fixes points in $Q_f$ is a straightforward exercise in linear algebra.

III. If $f$ is a homeomorphism then Corollary~\ref{cor:mcalg} can be applied.
\end{proof}

\section{Hurwitz classification of branched covers}

Let $X$ and $Y$ be two finite type Riemann surfaces. We recall that two finite degree branched covers $\phi$ and $\psi$ of
$Y$ by $X$ are {\it equivalent in the sense of Hurwitz} if there exist  homeomorphisms $h_0:Y\to Y,h_1:X\to X$ such that
$$h_0\circ \phi=\psi\circ h_1.$$
An equivalence class of branched covers is known as a {\it Hurwitz class}. Enumerating all Hurwitz classes with a given
ramification data is a version of the {\it Hurwitz Problem}. The classical paper of Hurwitz \cite{hur}
gives an elegant and explicit solution of the problem for the case $X=\hat\CC$.

We will need the following narrow consequence  of Hurwitz's work  (for a modern treatment, see \cite{barth}):
\begin{theorem}
\label{th:hurwitz}
There exists an algorithm $\cA$ which, given PL branched covers $\phi$ and $\psi$ of PL spheres and a PL 
homeomorphism $h_0$ mapping the
critical values of $\phi$ to those of $\psi$, does the following:
\begin{enumerate}
\item decides whether $\phi$ and $\psi$ belong to the same Hurwitz class or not;
\item if the answer to (1) is affirmative, decides whether there exists a homeomorphism $h_1$ such that $h_0\circ \phi=\psi\circ h_1.$
\end{enumerate}
\end{theorem}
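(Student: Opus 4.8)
The plan is to translate both questions into the combinatorics of monodromy permutations, where each becomes a finite search.

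\emph{The combinatorial model.} A PL branched cover $p\from X\to Y$ of degree $d$ of PL spheres, branched over a set $B=\{v_1,\dots,v_n\}\subset Y$, is determined up to isomorphism over $Y$ by its monodromy representation $\pi_1(Y\setminus B,\ast)\to S_d$. Choosing a PL triangulation, a basepoint, and a standard ``lollipop'' system of loops $g_1,\dots,g_n$ encircling $v_1,\dots,v_n$ with $g_1\cdots g_n=1$, the cover is recorded by the tuple $\tau(p)=(\sigma_1,\dots,\sigma_n)\in S_d^{\,n}$ with $\prod_i\sigma_i=1$ and $\langle\sigma_1,\dots,\sigma_n\rangle$ transitive, where $\sigma_i$ is computed by lifting $g_i$ simplex by simplex through the simplicial cover; this extraction is effective from the PL data. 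The cycle type of $\sigma_i$ is the ramification profile over $v_i$. Two such tuples arise from isomorphic covers over $Y$ with the same $B$ and the same loop system iff they are \emph{simultaneously conjugate} in $S_d$; changing the loop system replaces $\tau(p)$ by its image under a sequence of elementary \emph{Hurwitz moves}, and the Hurwitz moves together with simultaneous conjugation generate the action of the mapping class group of $(Y,B)$ on such tuples (classical; see \cite{hur,barth}). Consequently, two covers lie in the same Hurwitz class iff one tuple lies in the orbit of the other under the group generated by Hurwitz moves and simultaneous conjugation.

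\emph{Deciding the Hurwitz class, part (1).} First test the necessary numerical invariants: $\deg\phi=\deg\psi=d$, the same number $n$ of branch points, and the same \emph{multiset} of ramification profiles; if any fails, output ``different Hurwitz class''. Otherwise compute $\tau(\phi)$ and $\tau(\psi)$ as above. Since $S_d^{\,n}$ is finite, the orbit of $\tau(\phi)$ under Hurwitz moves and simultaneous conjugation is finite; enumerate it by breadth-first search from $\tau(\phi)$, applying the finitely many generating moves, and test whether $\tau(\psi)$ is reached. This halts and answers (1).

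\emph{Deciding the existence of $h_1$, part (2).} Write $\phi\from X_\phi\to Y_\phi$ and $\psi\from X_\psi\to Y_\psi$, so that the given $h_0\from Y_\phi\to Y_\psi$ carries the branch locus $B_\phi$ onto $B_\psi$. Fixing $h_0$ removes the Hurwitz-move freedom: a lift $h_1$ with $h_0\circ\phi=\psi\circ h_1$ exists iff the pulled-back cover $h_0^\ast\psi\to Y_\phi$ is isomorphic over $Y_\phi$ to $\phi$. The monodromy of $h_0^\ast\psi$ is $\tau(\psi)$ precomposed with the isomorphism $(h_0)_\ast\from\pi_1(Y_\phi\setminus B_\phi)\to\pi_1(Y_\psi\setminus B_\psi)$, i.e. the tuple obtained from $\tau(\psi)$ by the base change that $h_0$ induces between the two chosen loop systems; this base change is computable from the PL data, and the answer depends only on the isotopy class of $h_0$ rel $B_\phi$ (isotopic $h_0$ induce conjugate tuples, and an isotopy of $h_0$ lifts to one of $h_1$). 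Hence $h_1$ exists iff this tuple and $\tau(\phi)$ are simultaneously conjugate in $S_d$, which is decided by checking the $d!$ candidate conjugating permutations. This answers (2).

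\emph{Main obstacle.} The conceptual content — monodromy, Hurwitz moves, finiteness of Hurwitz orbits — is classical, so the real work is algorithmic bookkeeping: extracting $\tau(p)$ from a raw PL branched cover (building a compatible triangulation, selecting a basepoint and a standard system of loops adapted to the branch values, and tracing lifts), and, for part (2), computing the base change that $h_0$ induces on these loop systems. Neither step is deep, but both must be done carefully so that the only indeterminacy remaining is simultaneous conjugation in $S_d$ — precisely the ambiguity that is quotiented out. Finiteness of the Hurwitz orbit is what makes the search in (1) terminate; finiteness of $S_d$ makes (2) a bounded check.
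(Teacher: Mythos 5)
The paper offers no proof of Theorem~\ref{th:hurwitz} at all, simply citing Hurwitz \cite{hur} and the modern treatment \cite{barth}; your reduction to monodromy tuples, with Hurwitz-move orbits plus simultaneous conjugation for part (1) and a single simultaneous-conjugacy test of $\tau(\phi)$ against $(h_0)_*$-transported $\tau(\psi)$ for part (2), is exactly the classical argument those references supply, and it is correct. The only convention worth making explicit is that all homeomorphisms in the Hurwitz equivalence are taken orientation-preserving, which is what makes the half-twist generators of the mapping class group of $(S^2,B)$ — and hence the Hurwitz moves together with simultaneous conjugation — account for the full ambiguity.
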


\section{Equivalence on thick parts}

\subsection{Canonical obstructions and thin-thick decompositions of Thurston maps.}

Let $f$ be a Thurston map, and $\Gamma=\{\gamma_j\}$ an $f$-stable multicurve. Consider   a finite collection of 
disjoint closed annuli
$A_{0,j}$ which are homotopic to the respective $\gamma_j$. For each $A_{0,j}$ consider only non-trivial preimages; these form
a collection of annuli $A_{1,k}$, each of which is homotopic to one of the curves in $\Gamma$.
Following Pilgrim, we say that the pair $(f,\Gamma)$
 is in a {\it standard form} (see Figure \ref{fig-decomp})
if there exists a collection of annuli $A_{0,j}$, which we call {\it decomposition annuli}, as above such that the
following properties hold:
\begin{itemize}
\item[(a)] for each curve $\gamma_j$ the annuli $A_{1,k}$ in the same homotopy class are contained inside $A_{0,j}$;
\item[(b)] moreover, the two outermost annuli $A_{1,k}$ as above share their outer boundary curves with $A_{0,j}$.
\end{itemize}

\begin{figure}[ht]
\includegraphics[width=\textwidth]{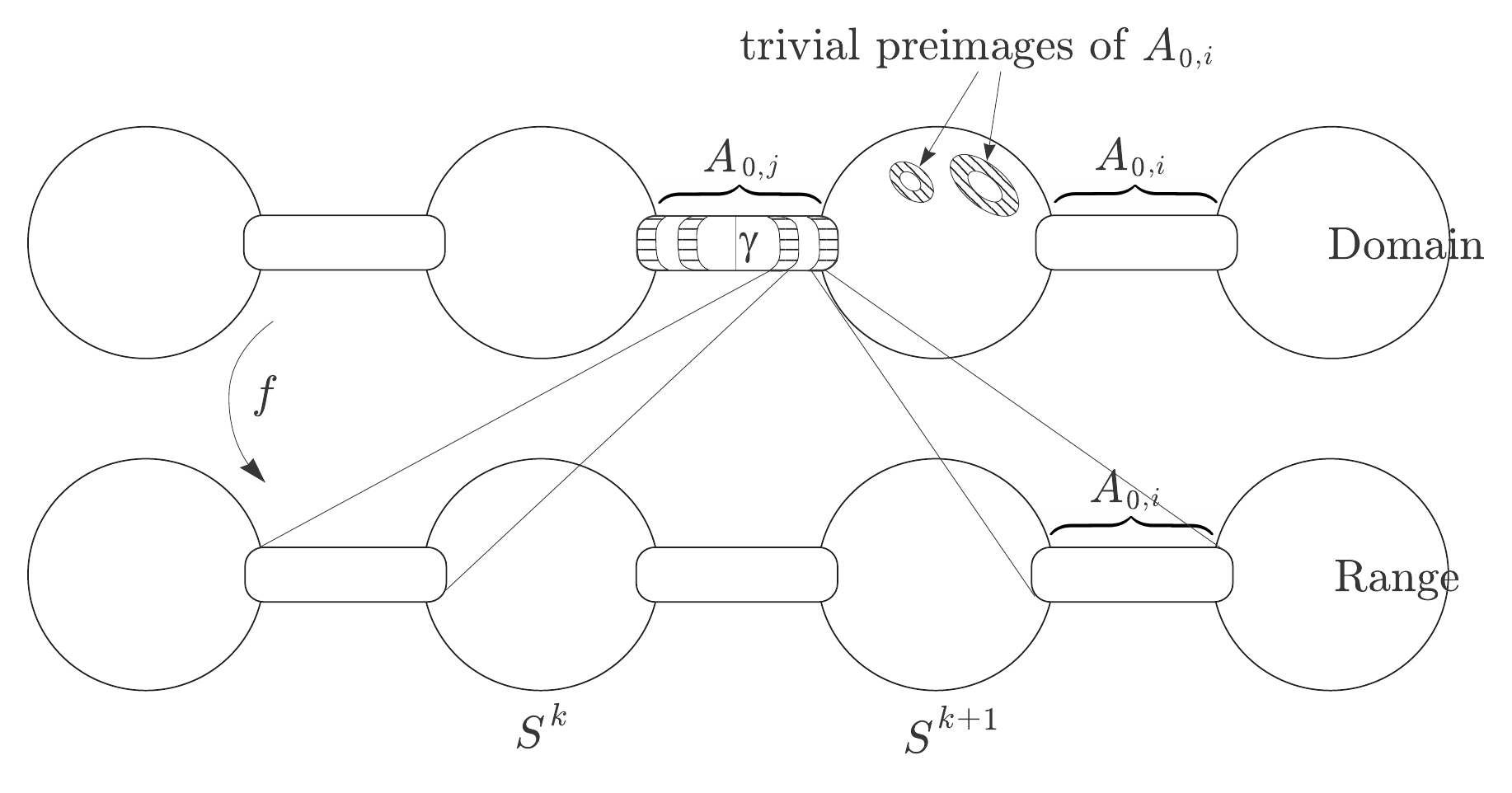}
\caption{\label{fig-decomp}Pilgrim's decomposition of a Thurston map}
\end{figure}

A Thurston map with a multicurve 
in a standard form can be decomposed as follows. First, all annuli $A_{0,j}$ are removed, leaving a collection of spheres with holes, denoted $S_0(j)$. For each $j$, there exists a unique connected component $S_1(j)$ of
$f^{-1}(\cup S_0(j))$ which has the property $\partial S_0(j)\subset \partial S_1(j)$. Any such component $S_1(j)$ is a sphere with
holes, with boundary curves being of two types: boundaries of the removed annuli, or boundaries of trivial preimages of the
removed annuli. 

The holes in $S_0(j)\subset S^2$ can be filled as follows. Let $\chi$ be a boundary curve of
a component $D$ of $S^2\setminus S_0(j)$. 
Let $k\in\NN$ be the first iterate $f^k:\chi\to\chi$, if it exists. For each $0\leq i\leq k-1$ the curve 
$\chi_i\equiv f^i(\chi)$ bounds a component $D_i$ of $S^2\setminus S_0(m_i)$ for some $m_i$. Denote $d_i$ the degree
of $f:\chi_i\to \chi_{i+1}$.  Select  homeomorphisms 
$$h_i:\bar D_i\to \bar \DD\text{ so that }h_{i+1}\circ f\circ h_i^{-1}(z)=z^{d_i}.$$
Set $\tilde f\equiv f$ on $\cup S_0(j)$.
Define new punctured spheres $\tilde S(j)$ by adjoining cups $h_i^{-1}(\bar \DD\setminus \{0\})$
to $S_0(j)$. Extend the map $\tilde f$ to each $D_i$ by setting 
$$\tilde f(z)=h_{i+1}^{-1}\circ (h_i(z))^{d_i}.$$
We have thus replaced every hole with a cap with a single puncture. We call such a procedure {\it patching} a component.

By construction, the map 
$$\tilde f:\cup \tilde S(j)\to\cup\tilde S(j)$$
contains a finite number of periodic cycles of  punctured spheres. For every periodic sphere $\tilde S(j)$ denote
by $\cF$ the first return map $\tl f^{k_j}:\tilde S(j)\to\tilde S(j).$ This is again a Thurston map or a homemorphism.
The collection of maps $\cF$ and the combinatorial information required to glue the spheres $S_0(j)$ back together 
is what Pilgrim called a {\it decomposition} of $f$ along $\Gamma$; we will denote it $\mathcal S_\Gamma$.

Pilgrim showed:
\begin{theorem}
\label{th:decomp1}
For every obstructed marked Thurston map $f$ with an obstruction $\Gamma$ there exists an equivalent map $g$ 
such that $(g,\Gamma)$ is in a standard form, and thus can be decomposed.
\end{theorem}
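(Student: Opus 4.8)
The plan is to keep the obstruction $\Gamma=\{\gamma_j\}$ fixed and to move $f$ within its Thurston class by a single ambient isotopy until the preimages of a fixed system of annular neighborhoods of the $\gamma_j$ sit in standard position. First I would fix pairwise disjoint open annuli $A_{0,j}$, with $A_{0,j}$ a regular neighborhood of $\gamma_j$ chosen thin enough to avoid the marked points, hence also the critical values of $f$ (which lie in $P_f$). Then $f\colon f^{-1}\big(\bigcup_j A_{0,j}\big)\to\bigcup_j A_{0,j}$ is an honest covering, so each of its finitely many components is an annulus; by $f$-stability each such component is either trivial or homotopic rel $P_f$ to a unique $\gamma_i\in\Gamma$. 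Moreover, the non-trivial components homotopic to a given $\gamma_i$ are pairwise disjoint curves in a single homotopy class, so they form a linearly ordered nested chain, and, being non-homotopic to and disjoint from the preimage annuli in the other classes, this chain occupies a well-defined ``slot'' separated from the others.

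The core step is then to build a homeomorphism $\psi$ isotopic to the identity rel $P_f$ that, for each $i$, drags the non-trivial preimage components homotopic to $\gamma_i$ into $A_{0,i}$, respecting their nested order, and with the outermost one on each side of $A_{0,i}$ pushed so that its outer boundary becomes $\bdy A_{0,i}$ (if there is only one non-trivial component in the class, it is spread across all of $A_{0,i}$, sharing both boundary curves). Because the slots for distinct $\gamma_i$ are disjoint, these adjustments can be made independently and assembled into one $\psi$. Setting $g=f\circ\psi^{-1}$ gives $g=\mathrm{id}\circ f\circ\psi^{-1}$, so $(\mathrm{id},\psi)$ is an equivalence pair and $g$ is Thurston equivalent to $f$. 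Since $g^{-1}(A_{0,i})=\psi\big(f^{-1}(A_{0,i})\big)$, the annuli $A_{0,j}$ are decomposition annuli putting $(g,\Gamma)$ in standard form: property (a) holds because each $\psi$-image lies in the prescribed $A_{0,i}$, and property (b) holds because the outermost ones were pushed flush against $\bdy A_{0,i}$. The decomposition $\calS_\Gamma$ and the first-return maps $\cF$ are then read off by the patching construction described above.

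The step I expect to be the main obstacle is the construction of $\psi$ realizing property (b) for all curves simultaneously. One must check that the configuration formed by the $A_{0,j}$ together with all non-trivial preimage annuli is ambient-isotopic, rel $P_f$, to the ``standard'' configuration in which the outermost preimage annuli are flush with the decomposition-annulus boundaries. The engine here is that on a surface two homotopic essential simple closed curves are isotopic, together with the slot separation from the first paragraph, which lets one normalize one homotopy class at a time and then glue the isotopies; the trivial preimage components are inert and can be ignored, since ``trivial'' is exactly what is discarded in the decomposition. The remaining work — keeping track of orientations so that the outer boundaries are matched correctly, and handling the degenerate classes where some $A_{0,j}$ has no non-trivial preimage annuli at all — is routine but must be done carefully to ensure the output genuinely satisfies Pilgrim's conditions (a) and (b).
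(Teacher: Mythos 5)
The paper gives no proof of this statement; it simply attributes it to Pilgrim, so there is nothing internal to compare against. Your construction is the standard one (and essentially Pilgrim's): fix disjoint annular neighborhoods $A_{0,j}$ of the $\gamma_j$ avoiding the marked set, observe that $f^{-1}\big(\bigcup_j A_{0,j}\big)$ is a disjoint union of annuli missing $Q_f$ (since $f(Q_f)\subset Q_f$ and the $A_{0,j}$ avoid $Q_f$), isotope these into position, and replace $f$ by $g=f\circ\psi^{-1}$, noting that $(\mathrm{id},\psi)$ is an equivalence pair and $g^{-1}(A_{0,j})=\psi\big(f^{-1}(A_{0,j})\big)$. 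This is correct. Two refinements: all isotopies and homotopy classes should be taken rel the full marked set $Q_f$, not just rel $P_f$, since the theorem concerns marked Thurston maps; and the ``one class at a time, then glue'' step you flag as the main obstacle is most cleanly handled by a single application of the change-of-coordinates principle --- the cores of all non-trivial preimage annuli form one multicurve, each component isotopic rel $Q_f$ to some $\gamma_j$, and any two isotopic multicurves in $(S^2,Q_f)$ are ambient isotopic rel $Q_f$, which places every class inside its prescribed $A_{0,j}$ simultaneously; a further isotopy supported in the $A_{0,j}$ then arranges the nesting and pushes the outermost annuli flush with $\bdy A_{0,j}$ to secure condition (b), with the empty and singleton classes handled vacuously or by spreading the unique annulus across $A_{0,j}$ as you describe.
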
 

Pilgrim \cite{Pil1} defined a {\it canonical decomposition} of a Thurston map $f$ based on his definition of a canonical Thurston obstruction. His original definition was framed in the language of iteration on a Teichm{\"u}ller space; we will give an equivalent definition discovered by the second author  \cite{seljmd}:
\begin{theorem}
Suppose $f$ is an obstructed Thurston mapping. Then
there exists a unique minimal (with respect to inclusion) obstruction $\Gamma_f$, which is the canonical obstruction in the sense of \cite{Pil1}, with the following properties. 
  	\begin{itemize}
		\item If a first-return map $\cF$ of a cycle of components  in $\mathcal S_{\Gamma_f}$ is a $(2,2,2,2)$-map, then  every curve of every simple Thurston obstruction for $\cF$ has two postcritical points of $f$ in each complementary component and the two eigenvalues of $\hat{\cF}_*$ are equal or non-integer.
		\item If the first-return map $\cF$ of   a cycle of components  in $\mathcal S_{\Gamma_f}$ is not a $(2,2,2,2)$-map nor a homeomorphism, then there exists no Thurston obstruction for $\cF$.
	\end{itemize}
\end{theorem}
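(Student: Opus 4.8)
The plan is to study Thurston's pullback map $\sigma_f$ on the \Teich space $\mathcal T(S^2,Q_f)$ through its continuous extension to the augmented \Teich space $\overline{\mathcal T}$, following the strategy of \cite{seljmd}. Recall that $\overline{\mathcal T}=\bigsqcup_\Gamma \mathcal T_\Gamma$ is stratified by the multicurves $\Gamma$ on $(S^2,Q_f)$, with $\mathcal T_\emptyset=\mathcal T$; a point of $\mathcal T_\Gamma$ is a stable noded surface in which exactly the curves of $\Gamma$ have been pinched, and $\mathcal T_\Gamma$ is a product of the \Teich spaces of the complementary pieces. The first step is to extend $\sigma_f$ to a continuous self-map $\bar\sigma_f$ of $\overline{\mathcal T}$; this is carried out by tracking how hyperbolic (equivalently extremal) lengths of curves degenerate under the pullback, and it is the technical backbone of the whole argument. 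Pilgrim's canonical obstruction $\Gamma_f$ is, by definition, the set of curves whose length along an orbit $\sigma_f^{\,n}(\tau)$ tends to zero independently of the basepoint $\tau$, so it is precisely the multicurve such that the $\omega$-limit set of every orbit lies in the stratum $\mathcal T_{\Gamma_f}$.

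Next I would determine this limiting behavior. When $f$ has hyperbolic orbifold and is obstructed, Thurston's theorem \cite{DH} says $\sigma_f$ has no fixed point in $\mathcal T$, so by compactness of $\overline{\mathcal T}$ every orbit accumulates on $\overline{\mathcal T}\setminus\mathcal T$. A short argument shows that the multicurve pinched at an accumulation point is $f$-stable and is the same at every accumulation point, so the $\omega$-limit set lies in a single stratum $\mathcal T_{\Gamma_f}$ with $\Gamma_f$ an $f$-stable multicurve, and Thurston's criterion then forces $\lambda(\Gamma_f)\ge 1$, i.e.\ $\Gamma_f$ is a Thurston obstruction. (If the orbifold of $f$ is parabolic, one invokes \thmref{th:degenerateLevy} instead.) Uniqueness of the $\omega$-limit stratum gives uniqueness of $\Gamma_f$; for minimality one must rule out that a proper sub-multicurve $\Gamma'\subsetneq\Gamma_f$ is itself an obstruction — the idea being that if it were, the pinching dynamics would already be governed by $\Gamma'$ and the curves of $\Gamma_f\setminus\Gamma'$ would have no reason to degenerate, contradicting the characterization of $\Gamma_f$. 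This identifies $\Gamma_f$ with the canonical obstruction of \cite{Pil1}.

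Finally I would read off the structure of the first-return maps from the dynamics of $\bar\sigma_f$ on the invariant stratum $\mathcal T_{\Gamma_f}$. After replacing $f$ by a suitable power and re-marking, $\bar\sigma_f|_{\mathcal T_{\Gamma_f}}$ respects the product decomposition of $\mathcal T_{\Gamma_f}$ over the patched spheres $\widetilde S(j)$ of $\mathcal S_{\Gamma_f}$, and on each periodic cycle it acts as the pullback map $\sigma_{\cF}$ of the corresponding first-return map $\cF$. Since the global orbit converges into $\mathcal T_{\Gamma_f}$ rather than being pushed into a deeper stratum, each $\sigma_{\cF}$ has non-escaping orbits; applying Thurston's theorem to $\cF$, this means that if $\cF$ has hyperbolic orbifold then $\cF$ is unobstructed and so has no Thurston obstruction at all, while a first-return sphere with fewer than four marked points has trivial mapping class group and likewise admits no obstruction, and the only remaining non-homeomorphic possibility is a parabolic orbifold, which on a sphere with at least four marked points is forced to be of type $(2,2,2,2)$ — this yields the second bullet. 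For the first bullet, in the $(2,2,2,2)$ case $\cF$ lifts, through the natural branched torus double cover, to an affine map with integer matrix $\hat{\cF}_*$; a direct computation shows that a simple closed curve $\gamma$ can be an obstruction for $\cF$ only along an eigendirection of $\hat{\cF}_*$ with an integer eigenvalue, and that such a $\gamma$ fails to be forced to pinch globally — hence is not added to $\Gamma_f$ — exactly when $\hat{\cF}_*$ has two equal or non-integer eigenvalues, or when the marked points of $Q_f$ on $\widetilde S(j)$ distribute two to each complementary pair of pants of $\gamma$; this is the stated dichotomy.

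The main obstacle is twofold. First, the continuous extension of $\sigma_f$ to $\overline{\mathcal T}$ together with the convergence/accumulation statement for orbits: $\sigma_f$ is far from proper and can move a point of a stratum $\mathcal T_\Gamma$ into a stratum of strictly greater or smaller depth, so controlling the degeneration of hyperbolic and extremal lengths — and, in particular, showing that an orbit cannot keep falling into ever-deeper strata — requires the delicate estimates that form the core of \cite{seljmd}. Second, the arithmetic-combinatorial analysis of the $(2,2,2,2)$ first-return maps: here one genuinely encounters obstructed first-return maps whose obstructions must be shown invisible to the global pinching dynamics, which comes down to playing the integer eigenstructure of $\hat{\cF}_*$ off against the combinatorial position, inside $\widetilde S(j)$, of the marked points of $Q_f$ created by patching; deciding precisely which simple obstructions of $\cF$ survive into $\Gamma_f$ and which do not is the subtle heart of the theorem.
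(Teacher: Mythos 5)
The paper does not actually prove this theorem: it is imported verbatim from \cite{seljmd} as an equivalent, topological reformulation of Pilgrim's \Teich-theoretic definition of the canonical obstruction, so there is no in-paper argument to compare yours against. Your outline does follow the strategy of the cited reference --- extend $\sigma_f$ continuously to the augmented \Teich space $\overline{\mathcal T}$, identify $\Gamma_f$ as the multicurve pinched on the stratum containing the $\omega$-limit sets of all orbits, and then read off the properties of the first-return maps from the induced dynamics on that stratum. As a roadmap this is the right one, and your treatment of the second bullet (hyperbolic-orbifold first-return maps must be unobstructed lest further curves pinch; non-$(2,2,2,2)$ parabolic orbifolds and spheres with at most three marked points carry no essential non-peripheral curves) is sound in outline.

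However, as written this is an outline rather than a proof: the two ``main obstacles'' you name at the end --- the continuity of $\overline{\sigma}_f$ together with the convergence of orbits to a single stratum, and the arithmetic analysis of which simple obstructions of a $(2,2,2,2)$ first-return map are invisible to the global pinching --- are not peripheral difficulties but constitute essentially the entire content of the theorem, and your sketch resolves neither. Two further points need repair. First, your gloss of minimality (``rule out that a proper sub-multicurve $\Gamma'\subsetneq\Gamma_f$ is itself an obstruction'') misreads the statement: a proper sub-multicurve of the canonical obstruction can perfectly well be an obstruction (e.g.\ one of several disjoint Levy cycles inside $\Gamma_f$); the theorem asserts minimality only among obstructions whose decompositions satisfy the two bullet properties, and that is what must be proved. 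Second, in the $(2,2,2,2)$ case you convert the stated conjunction (every curve of every simple obstruction for $\cF$ separates the marked points two-and-two, \emph{and} the eigenvalues of $\hat{\cF}_*$ are equal or non-integer) into a disjunction, which changes the statement being proved; the case analysis of when an obstruction of $\cF$ fails to force global pinching has to be organized so as to yield the conjunction.
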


\begin{definition}
If $\Gamma_f$ is the canonical obstruction, then the decomposition $\mathcal S_{\Gamma_f}$ is the {\it canonical decomposition} of $f$. In this case,
we call the components of the complement of the decomposition annuli the {\it thick parts}, and the decomposition annuli themselves the {\it thin parts}.
\end{definition}
From this point, only canonical decompositions of Thurston maps will be considered.

\begin{definition}
By \emph{equivalence on thick parts} $\phi$ between $f$ and $g$ we mean a homeomorphism defined on the union of patched thick parts of $f$ onto the union of patched thick parts of $g$ such that the following holds:

\begin{itemize}
\item Denote $\phi_W$ the restriction of $\phi$ to any patched thick component $W$.  
 If $X$ is a periodic patched thick component of $\tl f$ then $Y=\phi(X)$ is periodic for $\tl g$ with the same period. If  $\cF_X$, $\cG_Y$ denote the first return maps of $X$ and $Y$ respectively, then $\phi_X$ is an equivalence of $\cF_X$ and $\cG_Y$. 
\item Let $X$ be a periodic patched thick component and let  $W$ be a preimage of $X$ so that
$$\tl f^n(W)=X. $$
Denote $Y=\phi_X(X)$ and $Z=\phi(W)$. Then $\tl g^n(Z)=Y$ and $\phi_W$ is a lift of $\phi_X$  through actions of $\tl f$ and $\tl g$:
$$\phi_X\circ \tl f^n =\tl g^n\circ\phi_W.  $$
\end{itemize}
\end{definition}

\subsection{Centralizer on thick parts.}
For each periodic patched thick component $X=\cF(X)$ of the canonical decomposition of $f$, denote $C_X(f)\subset \text{PMCG}(X)$ the 
group of self-equivalences of the first return Thurston mapping $\cF|_X:X\to X$.

\begin{definition}
We define the \emph{centralizer on thick parts} $C_\text{thick}(f)$ of $f$ to be the group of all self-equevalences of $f$ on thick parts. By the previous definition, $C_\text{thick}(f)$ is isomorphic to the subgroup of the free abelian product $$C_\text{periodic}(f)\equiv \prod_{\text{periodic components }X} C_X(f)$$ consisting of all elements $\phi$ such that 
for every thick patched component $W$ with $\tl f^n(W)=X$, one can define $\phi_W$ so that $\phi_X\circ \tl f^n =\tl f^n\circ\phi_W $ (that is, 
$\phi$ can be lifted via the action of $\tl f$ to all strictily pre-periodic preimages of $X$).


\end{definition}

Note that since all $C_X$ are finitely generated (Theorem~\ref{thm-centralizer}) and $C_\text{thick}(f)$ is a subgroup of finite index, $C_\text{thick}(f)$ is also finitely generated.
Furthermore,
\begin{lemma}
\label{lem:selfeq1}
A generating set of $C_\text{thick}(f)$ can be computed explicitly.
\end{lemma}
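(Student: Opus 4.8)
The plan is to make explicit each of the finitely many pieces of data that determine $C_\text{thick}(f)$, and check that each piece is algorithmically computable. Recall that $C_\text{thick}(f)$ is a finite-index subgroup of $C_\text{periodic}(f) = \prod_X C_X(f)$, the product running over (representatives of) the finitely many periodic cycles of patched thick components. By Theorem~\ref{thm-centralizer}, each first return map $\cF|_X$ is either an unobstructed Thurston map or a homeomorphism, and in either case $C_X(f)$ is finitely generated with an explicitly computable generating set: for the homeomorphism case this is Corollary~\ref{cor:mcalg}, and for the Thurston map case it is the algorithm furnished by the proof of Theorem~\ref{thm-centralizer} (the orbifold is hyperbolic giving $C_X$ trivial, or parabolic with $P=Q$ of cardinality $4$ giving the explicitly described subgroup of $\Lambda = \text{PGL}(2,\Z)/\text{PGL}(2,\Z/2\Z)$, or parabolic with more marked points reducing to the $4$-point case by a linear-algebra computation). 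So we start by computing generating sets $\calG_X$ for every $C_X(f)$ and forming the finite generating set $\prod_X \calG_X$ of $C_\text{periodic}(f)$.

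Next I would describe $C_\text{thick}(f)$ inside $C_\text{periodic}(f)$ by the explicit lifting condition in the definition: an element $\phi = (\phi_X)_X$ lies in $C_\text{thick}(f)$ iff for every strictly pre-periodic patched thick component $W$ with $\tl f^n(W) = X$, the equivalence $\phi_X$ lifts through $\tl f^n$, i.e.\ there is $\phi_W$ with $\phi_X \circ \tl f^n = \tl f^n \circ \phi_W$. The key observation is that the obstruction to lifting an equivalence through a branched cover is a finite, checkable condition: given the PL data of $\tl f^n$ restricted to $W$ (which is part of the canonical decomposition data, computable by Theorem~\ref{th:canonicalgeometrization} / Theorem~\ref{th:decomp1}) and a PL representative of $\phi_X$ fixing critical values appropriately, Theorem~\ref{th:hurwitz}(2) decides whether the lift $\phi_W$ exists. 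Thus membership in $C_\text{thick}(f)$ translates into finitely many instances of the Hurwitz-lifting decision problem, one for each pre-periodic component $W$ (there are finitely many up to the $\tl f$-action, and only finitely many "first" preimages matter since iterating the condition at the first preimage level propagates it), each applied to the finitely many generators and their products as needed.

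The final step is to extract a generating set for the finite-index subgroup $C_\text{thick}(f) \le C_\text{periodic}(f)$. Since $C_\text{periodic}(f)$ is finitely generated and we have an algorithm deciding membership in $C_\text{thick}(f)$, we can enumerate elements of $C_\text{periodic}(f)$ in order of word length with respect to $\prod_X \calG_X$, testing each for membership, and by the Reidemeister--Schreier procedure (or simply by the fact that a finite-index subgroup of a finitely generated group is finitely generated, with generators of bounded length once a coset transversal is found) we obtain a finite generating set; the only thing needed to terminate this search is an a priori bound on the index $[C_\text{periodic}(f) : C_\text{thick}(f)]$, which is finite and can itself be computed, since the cosets are detected by where the pre-periodic-component lifting conditions fail, a finite amount of combinatorial data.

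The main obstacle I anticipate is the second step: one must package the topological lifting condition "$\phi_X$ lifts through $\tl f^n$" into a genuine instance of the algorithm $\cA$ of Theorem~\ref{th:hurwitz}, which requires producing PL branched-cover data and a PL homeomorphism of critical values from the (canonical decomposition) data one is handed, and verifying that the Hurwitz-class/lifting dichotomy of that theorem is exactly the dichotomy governing extendability of an equivalence on thick parts. Once this dictionary between "equivalence lifts through $\tl f^n$" and "Hurwitz lifting problem is solvable" is set up cleanly, the rest is the routine finite-index bookkeeping described above.
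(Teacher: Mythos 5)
Your proposal follows essentially the same route as the paper's (very terse) proof: compute generating sets of each $C_X(f)$ via Theorem~\ref{thm-centralizer}, reduce membership in $C_\text{thick}(f)$ to instances of the lifting problem decided by Theorem~\ref{th:hurwitz}, and then do a bounded word-length search to extract generators of the finite-index subgroup. Your Reidemeister--Schreier framing and your explicit flagging of the dictionary between ``$\phi_X$ lifts through $\tl f^n$'' and the Hurwitz decision problem merely make precise what the paper asserts in one line, so this is the same argument.
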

\begin{proof}
By Theorem \ref{thm-centralizer}, for each periodic component $X$, a generating set $A$ of $C_\text{periodic}(f)$ can be computed explicitly. Given the topological complexity of the covering maps $\tl f^n:W\to X$ for all thick preimages of periodic components, it is straightforward to obtain an upper bound on the word length (in terms of the elements of $A$) of the generating set of $C_\text{thick}(f)$. By Theorem~\ref{th:hurwitz}, we can verify algorithmically, which of the words, whose length is under this bound, correspond to elements of $C_\text{thick}(f)$.
\end{proof}

Consider two equivalences on thick parts $\phi$ and $\psi$ between two Thurston maps $f$ and $g$. Then $\phi^{-1} \circ \psi$ 
is a self-equivalence of $f$. This yields the following.

\begin{lemma}
\label{lem:selfeq2}
Let $\phi$ be an equivalence on thick parts between two Thurston maps $f$ and $g$. Then any other equivalence can be written $\phi \circ l$ where 
$l \in C_\text{thick}(f)$.
\end{lemma}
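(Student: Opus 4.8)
The plan is to verify that $\phi^{-1}\circ\psi$ is a self-equivalence on thick parts of $f$, which then gives $\psi = \phi\circ(\phi^{-1}\circ\psi)$ with $l = \phi^{-1}\circ\psi \in C_{\text{thick}}(f)$. Since $\phi$ and $\psi$ are both equivalences on thick parts from $f$ to $g$, each is defined on the union of patched thick parts of $f$ and maps it homeomorphically onto the union of patched thick parts of $g$; hence the composition $l := \phi^{-1}\circ\psi$ is a homeomorphism of the union of patched thick parts of $f$ onto itself. It remains to check that $l$ satisfies the two bullet conditions in the definition of equivalence on thick parts (with $g = f$).

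First I would check the periodic-component condition. Let $X$ be a periodic patched thick component of $\tl f$, so $\psi(X)$ is periodic for $\tl g$ with the same period, and then $\phi^{-1}(\psi(X))$ is periodic for $\tl f$ with the same period; call it $X' = l(X)$. On first-return maps, $\psi_X$ is an equivalence of $\cF_X$ with $\cG_{\psi(X)}$, and $\phi_{X'}$ is an equivalence of $\cF_{X'}$ with $\cG_{\psi(X)}$ (since $\phi(X') = \psi(X)$), so $\phi_{X'}^{-1}$ is an equivalence of $\cG_{\psi(X)}$ with $\cF_{X'}$; composing, $l_X = \phi_{X'}^{-1}\circ\psi_X$ is an equivalence of $\cF_X$ with $\cF_{X'}$, as required. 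Here one uses that an equivalence pair is determined by its homeomorphism on the marked set up to isotopy rel that set, so composition of equivalences is again an equivalence — this is the only mildly delicate point, and it is routine.

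Next I would check the lifting condition for strictly pre-periodic thick components. Let $W$ be a preimage of the periodic component $X$ with $\tl f^n(W) = X$. Set $Y = \psi_X(X)$ and note $\psi_W$ is a lift of $\psi_X$: $\psi_X\circ\tl f^n = \tl g^n\circ\psi_W$. Similarly, writing $X' = l(X)$ and $W' = l(W)$, and letting $Z = \phi(W)$ so that $\psi(W) = \phi(W')$ hence $W' = \phi^{-1}(\psi(W))$, we have $\tl g^n(\phi(W')) = \phi(\tl f^n(W')) $... — more precisely, $\phi$ satisfies $\phi_X\circ\tl f^n = \tl g^n\circ\phi_W$ for its own preimage data, so $\phi_{W'}$ is a lift with $\phi_{X'}\circ\tl f^n = \tl g^n\circ\phi_{W'}$. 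Chaining the two intertwining relations through the point $\psi(W) = \phi(W')$ gives $l_X\circ\tl f^n = \tl f^n\circ l_W$, exhibiting $l_W = \phi_{W'}^{-1}\circ\psi_W$ as a lift of $l_X$ through the actions of $\tl f$. Thus $l$ satisfies both conditions defining an element of $C_{\text{thick}}(f)$, and $\psi = \phi\circ l$ with $l\in C_{\text{thick}}(f)$, completing the proof.

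The main obstacle, such as it is, is purely bookkeeping: one must carefully track which component maps to which under $\phi$ versus $\psi$, and confirm that the intertwining square for $l$ is obtained by pasting together the squares for $\phi^{-1}$ and $\psi$ along the common target component in $g$. Once the indexing is set up consistently, both conditions follow by a diagram chase with no new ideas needed; in particular no new appeal to Theorem~\ref{thm-centralizer} or Theorem~\ref{th:hurwitz} is required, since this lemma is a formal consequence of the definition of equivalence on thick parts.
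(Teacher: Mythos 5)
Your proposal is correct and follows exactly the paper's (very terse) argument: the paper simply observes that $\phi^{-1}\circ\psi$ is a self-equivalence of $f$ on thick parts and deduces the lemma. Your diagram chase verifying the two defining conditions for $l=\phi^{-1}\circ\psi$ is the routine bookkeeping the paper leaves implicit.
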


\section{Algorithmic geometrization of thick parts}
The second and third authors proved the following \cite[Theorem 6.1]{SY}:
\begin{theorem}[\bf{Canonical geometrization}]\label{th:canonicalgeometrization}
  There exists an algorithm which for any Thurston map $f$ finds its canonical obstruction $\Gamma_f$.

Furthermore, let $\cF$ denote the collection of the first return maps of the canonical decomposition
of $f$ along $\Gamma_f$. Then  the algorithm outputs the following information:
\begin{itemize}
\item for every first return map with a hyperbolic orbifold, the unique (up to M{\"o}bius conjugacy)
marked rational map equivalent to it;
\item for every first return map of type $(2,2,2,2)$ the unique (up to affine conjugacy) affine map of the form $z \mapsto Az+b$ where $A \in \text{SL}_2(\ZZ)$ and $b\in \frac{1}{2}\Z^2$ with marked points which is equivalent to $f$ after quotient by the orbifold group $G$;
\item for every first return map which has a parabolic orbifold not of type $(2,2,2,2)$ the unique (up to  M{\"o}bius conjugacy) 
marked rational map map equivalent to it, which is a quotient of a complex affine map by the orbifold group.
\end{itemize}
\end{theorem}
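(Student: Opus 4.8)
This is \cite[Theorem 6.1]{SY}; the plan is to assemble the algorithm from the structural results recalled above. The computation splits into two tasks: (A) locate the canonical obstruction $\Gamma_f$, and (B) geometrize each first-return map of the decomposition $\mathcal S_{\Gamma_f}$.

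For task (A), I would start from a computable a priori bound, in terms of $\deg f$ and $|Q_f|$, on the number of components of a Thurston obstruction and on the geometric complexity of each such curve (say, its intersection number with a fixed filling system). Granting such a bound, one enumerates the finitely many multicurves $\Gamma$ below the threshold, checks $f$-stability by inspecting preimages, forms the transition matrix $f_\Gamma$, and computes its leading eigenvalue; this produces the complete finite list of $f$-stable $\Gamma$ with $\lambda(\Gamma)\ge 1$. One then singles out $\Gamma_f$ inside this list using the characterization recalled just before \thmref{th:decomp1}. Since that characterization is phrased through the first-return maps of $\mathcal S_\Gamma$ and their own obstructions, testing a candidate is recursive: tentatively decompose along $\Gamma$ by Pilgrim's patching procedure (\thmref{th:decomp1}), then re-run the whole procedure on each first-return piece, which is again a Thurston map but of strictly smaller topological complexity, so the recursion terminates; accept $\Gamma$ precisely when every resulting piece is a homeomorphism, an unobstructed Thurston map, or a $(2,2,2,2)$-map each of whose simple obstructions has the prescribed form.

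For task (B), decompose $f$ along $\Gamma_f$ into the finite collection of first-return maps $\cF$ and treat each according to its orbifold type. A piece $\cF$ with hyperbolic orbifold is unobstructed, hence by Thurston's Theorem \cite{DH} equivalent to a rational map, unique up to M\"obius conjugacy; to output it one may run the Thurston pull-back iteration on the relevant Teichm\"uller space with an effective stopping rule (convergence being guaranteed in the unobstructed hyperbolic case), or, to avoid convergence estimates, enumerate the finitely many M\"obius classes of postcritically finite rational maps with the given postcritical portrait (the zero-dimensional solution set of an explicit algebraic system) and keep the one equivalent to $\cF$ — equivalence with a rational map being decidable by lifting a marked homeomorphism of the postcritical sets and invoking \thmref{th:hurwitz}. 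For a piece $\cF$ of type $(2,2,2,2)$, pass to the torus that double-covers the underlying orbifold; $\cF$ lifts to an affine map whose linear part is read off from the induced action on $H_1$ of the torus and whose translation part is recovered from the images of the four cone points, existence and uniqueness up to affine conjugacy being exactly \thmref{th:degenerateLevy} together with the canonical property of $\Gamma_f$, which rules out the competing degenerate-Levy-cycle alternative. For a parabolic piece not of type $(2,2,2,2)$, the canonical property again forces unobstructedness, so $\cF$ is equivalent to a unique rational map that is a quotient of a complex affine map, obtained by the same lift-and-linearize step followed by passing to the orbifold quotient.

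The genuine obstacle is inside task (A): establishing the effective a priori bound on the curves that can appear in the canonical obstruction and showing the nested recursion has controlled depth — without these the curve search is not a finite computation. A secondary technical point is making the geometrization in (B) genuinely effective, i.e. quantifying how accurately the Thurston iteration must be run (equivalently, how large the coefficients of the algebraic system can get) before the correct map is pinned down; this rests on explicit forms of Thurston's rigidity and convergence estimates. Everything else — the finite $f$-stability checks, eigenvalue computations, lifts of marked homeomorphisms, and the Hurwitz-type recognition of \thmref{th:hurwitz} — is routine.
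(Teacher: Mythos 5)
First, note that the paper you are reading does not prove this statement at all: it is quoted verbatim as \cite[Theorem 6.1]{SY}, so the only thing to compare your argument against is the proof in \cite{SY} itself.

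Measured against that, your plan for task (A) rests on a step that does not exist and would fail as stated: you propose to reduce the search for $\Gamma_f$ to a \emph{finite} search by first establishing ``a computable a priori bound, in terms of $\deg f$ and $|Q_f|$, on the number of components of a Thurston obstruction and on the geometric complexity of each such curve.'' You correctly flag this as the crux, but no such effective bound on the intersection complexity of obstruction curves is known, and \cite{SY} neither proves nor uses one. The actual mechanism in \cite{SY} is different in kind: one runs an \emph{unbounded} enumeration of multicurves (testing each candidate against Selinger's characterization of the canonical obstruction, recalled before \thmref{th:decomp1}) \emph{dovetailed} with complementary semi-decision procedures that search for geometrizations and equivalences of the pieces; Thurston's dichotomy (together with \thmref{th:degenerateLevy} in the parabolic case) guarantees that exactly one branch of each such pair of semi-algorithms terminates, which is what converts the countable search into an algorithm. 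Without either the a priori bound or this dovetailing argument, your step ``enumerates the finitely many multicurves below the threshold'' is not a finite computation, so the recursion you build on top of it never gets off the ground. The same remark applies to your ``secondary technical point'' in task (B): \cite{SY} does not quantify the convergence of the Thurston pull-back iteration; it certifies a candidate rational (or affine) model by enumerating candidate marked equivalences and checking them combinatorially via lifting and Hurwitz-type criteria (as in \thmref{th:hurwitz}), again a semi-algorithm whose termination is guaranteed by rigidity rather than by error estimates. Your treatment of the $(2,2,2,2)$ and parabolic cases via the torus cover and \thmref{th:degenerateLevy} is consistent with \cite{SY}, but the two effectiveness gaps you yourself identify are precisely the points where the argument must be replaced by the interleaved-enumeration scheme, not merely ``established.''
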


\section{Extending equivalence from thick to thin parts}
The following is standard (see e.g. \cite{primer}):
\begin{proposition}
\label{prop:free abelian}
For every Thurston obstruction $\Gamma=\{\alpha_1,\ldots,\alpha_n\}$, the Dehn twists
$T_{\alpha_j},\; j=1\ldots n$ generate a free Abelian subgroup of $\text{PMCG}(S\setminus Q_f)$.
\end{proposition}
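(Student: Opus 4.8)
The plan is to run the classical dual-curve argument. Since $\alpha_1,\dots,\alpha_n$ are pairwise disjoint, their Dehn twists can be realized with pairwise disjoint supports and hence commute, so there is a surjective homomorphism $\Z^n \twoheadrightarrow \langle T_{\alpha_1},\dots,T_{\alpha_n}\rangle \subset \text{PMCG}(S\setminus Q_f)$ sending the $j$-th standard basis vector to $T_{\alpha_j}$. It then suffices to prove this map is injective, i.e.\ that $\prod_{j=1}^n T_{\alpha_j}^{k_j}$ is isotopic to the identity only when every $k_j=0$.

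To detect the exponents one at a time, I would first produce, for each $i$, a simple closed curve $\beta_i$ in $S\setminus Q_f$ with $i(\beta_i,\alpha_i)>0$ and $i(\beta_i,\alpha_j)=0$ for $j\ne i$. For this, cut $S\setminus Q_f$ along $\Gamma\setminus\{\alpha_i\}$ and let $R$ be the component containing $\alpha_i$. Because $\Gamma$ is a multicurve — its curves are essential, non-peripheral and pairwise non-isotopic — the curve $\alpha_i$ remains essential and non-peripheral in $R$: it cannot bound a disk or a once-punctured disk in $R$ without doing so in $S\setminus Q_f$, and it cannot be isotopic in $R$ to a boundary curve without either being isotopic to some $\alpha_j$ or being peripheral in $S\setminus Q_f$. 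Hence $R$ contains a simple closed curve $\beta_i$ with $i(\beta_i,\alpha_i)>0$, and $\beta_i$ is automatically disjoint from every $\alpha_j$ with $j\ne i$.

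Next I would invoke the standard Dehn-twist intersection estimate (see e.g.\ \cite{primer}): for any simple closed curves $a,b,c$ and any $k$,
\[
\bigl|\, i\bigl(T_c^{\,k}(a),b\bigr) - |k|\, i(a,c)\, i(b,c)\,\bigr| \;\le\; i(a,b).
\]
Fix $i$, set $f=\prod_{j} T_{\alpha_j}^{k_j}$, and note that the twists about the $\alpha_j$ with $j\ne i$ fix $\beta_i$ up to isotopy, so $f(\beta_i)$ is isotopic to $T_{\alpha_i}^{k_i}(\beta_i)$. Applying the estimate with $a=b=\beta_i$ and $c=\alpha_i$, and using $i(\beta_i,\beta_i)=0$, gives $i\bigl(f(\beta_i),\beta_i\bigr)=|k_i|\, i(\alpha_i,\beta_i)^2$. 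If $f$ were isotopic to the identity, then $f(\beta_i)$ would be isotopic to $\beta_i$ and the left side would be $0$; since $i(\alpha_i,\beta_i)>0$ this forces $k_i=0$. Letting $i$ range over $1,\dots,n$ shows the word is trivial, so $\Z^n$ embeds and $\langle T_{\alpha_1},\dots,T_{\alpha_n}\rangle$ is free abelian of rank $n$.

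The only step with real content is the second paragraph — verifying that $\alpha_i$ survives as an essential, non-peripheral curve after cutting away the other curves of $\Gamma$, so that a curve $\beta_i$ meeting $\alpha_i$ and nothing else genuinely exists; this is precisely where the defining properties of a multicurve enter. Everything else is the routine intersection-number calculus for Dehn twists.
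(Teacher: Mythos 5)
Your proof is correct, and it is exactly the standard argument from Farb--Margalit that the paper invokes without proof (the paper simply cites \cite{primer} and calls the statement standard). Both the dual-curve construction and the twist--intersection estimate are sound as you've written them, so there is nothing to add.
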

We write $\ZZ^\Gamma\simeq \ZZ^n$ to denote the subgroup generated by $T_{\alpha_j}$.

\noindent
 We will need the following straightforward generalization of  \cite[Proposition 7.7]{SY}:
 
 \begin{proposition} Let $f, g$ be equivalent Thurston maps. Let the pair $(\phi_1,\phi_2)$ realize the equivalence of the thick components of $f$ and $g$. Extend $\phi_1$ to a homeomorphism of the whole sphere $S^2\setminus Q_f$, defining it on the thin parts in an arbitrary fashion. 
   Then there exist  $m \in \ZZ^\Gamma$, $\psi \in C_{\text{thick}}$, and an equivalence pair $(h_1,h_2)$ for $f$, $g$ such that $h_1=\phi_1 \circ \psi \circ m$.
\label{p:NormalEquivalence}
\end{proposition}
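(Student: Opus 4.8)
\emph{Proof plan.} The strategy is to start from a genuine global equivalence, whose existence is guaranteed by hypothesis, and to show that its first coordinate agrees with $\phi_1$ up to exactly the two moves allowed in the statement: post-composition (on the thick parts) by a self-equivalence on thick parts, and a multitwist along $\Gamma=\Gamma_f$. This is the direct analogue of \cite[Proposition~7.7]{SY}, and I would follow the same line.

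First, fix an equivalence pair $(H_1,H_2)$ for $f$ and $g$. A global equivalence conjugates $f$ to $g$ and therefore carries the canonical obstruction $\Gamma_f$ to $\Gamma_g$ and the thick and thin parts of $f$ to those of $g$; after replacing $(H_1,H_2)$ inside its equivalence class I may assume $H_1$ is compatible with the decompositions $\mathcal S_{\Gamma_f}$ and $\mathcal S_{\Gamma_g}$. Passing to the patched spheres, the restriction of $H_1$ to the union of patched thick components of $f$ is then an equivalence on thick parts between $f$ and $g$ in the sense of the definition (it matches periodic components with periodic components of the same period, conjugates first return maps to first return maps, and satisfies the lifting conditions on the strictly pre-periodic components, all of these being inherited from the corresponding properties of $(H_1,H_2)$). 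Applying \lemref{lem:selfeq2} to the two equivalences on thick parts $\phi_1|_{\text{thick}}$ and $H_1|_{\text{thick}}$ then yields some $\psi\in C_{\text{thick}}(f)$ with $H_1|_{\text{thick}}=\phi_1|_{\text{thick}}\circ\psi$ up to isotopy rel the marked points of the thick components.

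Next I would extend $\psi$ to a homeomorphism of $S^2$ rel $Q_f$ (defined arbitrarily on the thin parts), so that $\phi_1\circ\psi$ is globally defined and agrees with $H_1$ on the patched thick parts up to isotopy rel $Q_f$. Because equivalences on thick parts carry boundary curves of thick components to boundary curves, one further adjustment of $\psi$ supported in collar neighbourhoods of these curves (using that a homeomorphism of a circle is unique up to isotopy) lets me assume $\phi_1\circ\psi$ and $H_1$ coincide identically on a neighbourhood of the union of patched thick parts of $f$. Then $m:=(\phi_1\circ\psi)^{-1}\circ H_1$ is a homeomorphism of $S^2$ equal to the identity near the thick parts, hence supported on the decomposition annuli and trivial near their boundaries. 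The point is that, after patching, all points of $Q_f$ have been pushed into the thick caps, so these annuli contain no marked points; since the mapping class group of an annulus rel its boundary is infinite cyclic, generated by the Dehn twist, $m$ is isotopic rel $Q_f$ to a multitwist, i.e.\ $m\in\ZZ^\Gamma$ (\propref{prop:free abelian}). Hence $H_1$ is isotopic rel $Q_f$ to $h_1:=\phi_1\circ\psi\circ m$.

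Finally, I would upgrade $h_1$ to the first coordinate of an equivalence pair by the standard Thurston-equivalence lifting argument: choose an isotopy $(h_1^t)_{t\in[0,1]}$ rel $Q_f$ from $H_1$ to $h_1$, and lift it through the branched covers to an isotopy $(h_2^t)$ with $h_2^0=H_2$ and $g\circ h_2^t=h_1^t\circ f$ for all $t$; since $h_1^t$ is constant on $Q_f$, the lift $h_2^t$ is constant on $Q_f$ as well, so $(h_1,h_2):=(h_1^1,h_2^1)$ is an equivalence pair for $f$ and $g$ with $h_1=\phi_1\circ\psi\circ m$. I expect the one genuinely delicate step to be the middle one --- arranging that $\phi_1\circ\psi$ and $H_1$ literally agree on a neighbourhood of the thick parts so that their discrepancy is honestly an annular multitwist, and verifying that no point of $Q_f$ interferes --- whereas the first step is a direct invocation of \lemref{lem:selfeq2} and the last is routine covering-space/Thurston lifting.
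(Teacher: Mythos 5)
Your argument is correct and is exactly the intended one: the paper states this proposition without proof, deferring to it as a ``straightforward generalization'' of \cite[Proposition 7.7]{SY}, and your reconstruction (restrict a genuine equivalence $H_1$ to the patched thick parts, apply \lemref{lem:selfeq2} to compare with $\phi_1$, observe that the remaining discrepancy is supported on the marked-point-free decomposition annuli and hence is a multitwist in $\ZZ^\Gamma$ by \propref{prop:free abelian}, then transport the equivalence pair along the isotopy by lifting through the branched covers) is precisely the standard argument being invoked. No gaps.
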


Notice that if $h_1 \circ f = g\circ h_2$ where $h_1= \phi_1 \circ m_1$ for some $m_1 \in \ZZ^\Gamma$, then $h_2$ is homotopic to $\phi_1 \circ m_2$ for some other $m_2 \in \ZZ^\Gamma$. If $m_1=m_2$ then $h_1$ is homotopic to $h_2$ and these two homeomorphisms realize an equivalence between $f$ and $g$. Since we cannot check whether this happens for all elements of $\ZZ^\Gamma$  we will require the following proposition \cite[Proposition 7.8]{SY}:

\begin{proposition}
\label{p:finiteN} There exists explicitly computable $N \in \N$ such that if $n\in \ZZ^\Gamma$ where all coordinates of $n$ are divisible by $N$, then 

$$(\phi_1\circ (m_1+n)) \circ f = g\circ (\phi_2\circ( m_2+M_\Gamma n)),$$  
 whenever $$(\phi_1\circ m_1) \circ f = g\circ (\phi_2\circ m_2).$$  

\end{proposition}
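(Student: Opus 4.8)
The plan is to strip the statement down to a purely $f$-theoretic identity and then prove that identity by lifting Dehn twists through $f$. Since $\ZZ^\Gamma$ is abelian, $\phi_i\circ(m_i+\nu)$ represents the same mapping class as $(\phi_i\circ m_i)\circ T_\nu$, where $T_\nu$ denotes the multitwist along $\Gamma$ with coordinate vector $\nu$. Writing $h_1=\phi_1\circ m_1$ and $h_2=\phi_2\circ m_2$ (both homeomorphisms), the hypothesis says that $(h_1,h_2)$ is an equivalence pair, i.e. $h_1\circ f\simeq g\circ h_2$ rel $Q_f$, and since the multitwists $T_n$ and $T_{M_\Gamma n}$ fix $Q_f$ pointwise, the only thing to check for the conclusion is that $(h_1\circ T_n)\circ f\simeq g\circ(h_2\circ T_{M_\Gamma n})$ rel $Q_f$. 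Substituting $g\circ h_2=h_1\circ f$ in the right-hand side and cancelling the homeomorphism $h_1$, this is equivalent to
\[
T_n\circ f\ \simeq\ f\circ T_{M_\Gamma n}\qquad\text{rel }Q_f,
\]
where $T_n$ is a multitwist on the target sphere of $f$ and $T_{M_\Gamma n}$ one on the source sphere. Note that $g$ has disappeared; this is the point of the reduction.

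To establish the displayed identity I would use the lift of a Dehn twist through the branched cover $f$. Each $T_{\alpha_i}$, $\alpha_i\in\Gamma$, is supported in an annular neighbourhood of $\alpha_i$ disjoint from $Q_f$, hence admits a lift $\widetilde{T_{\alpha_i}}$ with $f\circ\widetilde{T_{\alpha_i}}=T_{\alpha_i}\circ f$ that is supported in a regular neighbourhood of $f^{-1}(\alpha_i)$, and in particular is the identity near $Q_f$. Over an annular neighbourhood of a component $\beta$ of $f^{-1}(\alpha_i)$ on which $f$ has degree $d_\beta$, the lift of one full twist is a $1/d_\beta$ twist, so $\widetilde{T_{\alpha_i}}^{\,N}$ restricts there to $T_\beta^{\,N/d_\beta}$. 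Choosing $N$ to be any multiple of $\operatorname{lcm}(1,\dots,\deg f)$ — explicitly computable, and divisible by all the local degrees $d_\beta$ — makes every $N/d_\beta$ an integer, so $\widetilde{T_{\alpha_i}}^{\,N}$ is an honest product of Dehn twists $\prod_{\beta}T_\beta^{\,N/d_\beta}$ over the components $\beta$ of $f^{-1}(\alpha_i)$. Now invoke the $f$-stability of $\Gamma$: each $\beta$ is either trivial, so that $T_\beta$ dies in $\text{PMCG}(S^2\setminus Q_f)$, or homotopic rel $Q_f$ to a unique $\alpha_j\in\Gamma$, so that $T_\beta\simeq T_{\alpha_j}$ rel $Q_f$. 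Collecting terms gives
\[
\widetilde{T_{\alpha_i}}^{\,N}\ \simeq\ \prod_j T_{\alpha_j}^{\,N\,M_{\alpha_j\alpha_i}}\qquad\text{rel }Q_f,
\]
where $M_{\alpha_j\alpha_i}=\sum_{\beta\subset f^{-1}(\alpha_i),\ \beta\simeq\alpha_j}1/d_\beta$ is exactly the $(\alpha_j,\alpha_i)$ entry of the Thurston transition matrix $M_\Gamma$ (the matrix of $f_\Gamma$), and every exponent $N\,M_{\alpha_j\alpha_i}$ is an integer by the choice of $N$.

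For a general $n\in\ZZ^\Gamma$ all of whose coordinates are divisible by $N$, write $T_n=\prod_i T_{\alpha_i}^{\,n_i}$; since the curves $\alpha_i$ are disjoint their $f$-preimages are disjoint, so the lift of $T_n$ is the product of the lifts of the $T_{\alpha_i}^{\,n_i}$, and one obtains $\widetilde{T_n}\simeq\prod_j T_{\alpha_j}^{\,\sum_i M_{\alpha_j\alpha_i}n_i}=T_{M_\Gamma n}$ rel $Q_f$, with $M_\Gamma n\in\ZZ^\Gamma$ because each $M_{\alpha_j\alpha_i}n_i\in\ZZ$. Hence
\[
f\circ T_{M_\Gamma n}\ \simeq\ f\circ\widetilde{T_n}\ =\ T_n\circ f\qquad\text{rel }Q_f,
\]
the homotopy on the left being $f$ composed with a homotopy supported away from $Q_f$; this is precisely the identity to which the proposition was reduced.

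The one step that needs genuine care is the lifting computation in the second paragraph: one must check that $\widetilde{T_{\alpha_i}}^{\,N}$ is \emph{literally} the asserted product of integral Dehn twists (this is exactly where divisibility of $N$ by the local degrees is used), that all the homotopies are taken rel $Q_f$, and that the lifts are normalised to be the identity near $Q_f$. Once this is in place, the reduction in the first paragraph, the identification of the exponents with the entries of $M_\Gamma$, and the explicit computability of $N$ (from $d_\beta\le\deg f$) are all routine; the argument then proceeds exactly as in \cite[Proposition 7.8]{SY}, the only difference being that $\Gamma$ is now an arbitrary $f$-stable multicurve rather than one arising from a particular configuration.
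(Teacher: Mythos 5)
Your argument is correct and is essentially the proof that the paper delegates to \cite[Proposition 7.8]{SY}: reduce, after cancelling the homeomorphism $h_1=\phi_1\circ m_1$, to the identity $T_n\circ f\simeq f\circ T_{M_\Gamma n}$ rel $Q_f$, and verify it by lifting multitwists through $f$, with $N=\operatorname{lcm}(1,\dots,\deg f)$ making all exponents $N/d_\beta$ and all entries of $M_\Gamma n$ integral. The only caveat is presentational: the single lift $\widetilde{T_{\alpha_i}}$ need not exist as a homeomorphism (the $1/d_\beta$-twist of a preimage annulus is a nontrivial rotation on one boundary circle, so it does not glue with the identity outside), so one should lift $T_{\alpha_i}^{N}$ directly rather than pass through its ``$N$-th root''; the identity you actually use, $f\circ\bigl(\prod_\beta T_\beta^{N/d_\beta}\bigr)=T_{\alpha_i}^{N}\circ f$ with $d_\beta\mid N$, is correct.
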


\section{Checking Thurston equivalence.}
We are now ready to present an algorithm which checks whether two Thurston maps $f$ and $g$ are equivalent or not.

\textbf{Algorithm.}

\begin{enumerate}
\item Find the canonical obstructions $\Gamma_f=\{\alpha_1,\ldots,\alpha_n\}$ and $\Gamma_g=\{\beta_1,\ldots,\beta_n\}$ 
 (Theorem~\ref{th:canonicalgeometrization}). 
\item Check whether the cardinality of the canonical obstructions $\Gamma_f=\{\alpha_1,\ldots,\alpha_n\}$ and $\Gamma_g=\{\beta_1,\ldots,\beta_n\}$  is the same, and whether the corresponding Thurston matrices coincide. If not,
output {\bf maps are not equivalent} and halt. 
\item Denote the thin parts (decomposition annuli) of $f$ and $g$ by $A_i$ and $B_i$ respectively.
Construct the first return maps $\cF$ and $\cG$ of the periodic patched thick parts for $f$ and $g$ and geometrize them
(\thmref{th:canonicalgeometrization}). Are the geometrizations of $\cF$ and $\cG$ the same up to reordering of the components of the first return map? If not,
output {\bf maps are not equivalent} and halt. 
\item {\bf for all } permutations $\sigma\in S_n$ {\bf do}

  \setlength{\leftskip}{1.5cm}
\item   Is there a homeomorphism $$h_\sigma:S^2\setminus Q_f\to S^2\setminus Q_g$$ sending $A_i\to B_{\sigma(i)}$? If not, {\bf continue.}
\item   Is it true that for every periodic patched thick component $X$ of $f$ the geometrization of $\cF|_X$ is the same as the geometrization of
  $\cG_{h_\sigma(X)}$? 
  If not, {\bf continue.} 
\item   For all thick components $C_j^f$ check whether the Hurwitz classes of the patched coverings $$\tilde f:\wtl{C_j^f}\to \wtl{f(C_j^f)}\text{ and }\tilde g:\wtl{h_\sigma(C_j^f)}\to \wtl{g(h_\sigma(C_j^f))}$$ are the same (\thmref{th:hurwitz}).
 If not, {\bf continue.}

\item  Construct equivalence pairs $(\eta^X_0,\eta^X_1)$ between first return maps $\cF_X$ and $\cG_{h_\sigma(X)}$ of periodic patched thick components corresponding by $h_\sigma$ and the group  $C_\text{periodic}(g)$ of self-equivalences of $\cG$. If the maps of some pair are not equivalent, {\bf continue.}

\item  Find an equivalence between first return maps $\cF_X$ and $\cG_{h_\sigma(X)}$ in the form $\phi_X=\psi\circ \eta^X_0$ with $\psi\in C_{\text{periodic}}(g)$ that can be lifted 
via branched covers $\tilde f$ and $\tilde g$ to every preimage of every thick component and preserves the set of marked points. Since $C_{\text{thick}}$ is a finite index subgroup of $C_{\text{periodic}}(g)$, this is a finite check (for representatives of each coset), which can be carried out algorithmically by (Theorem~\ref{th:hurwitz} and Lemma~\ref{lem:selfeq2}). If not possible, {\bf continue.}


\item  Lift the equivalences, to obtain a homeomorphism  $\phi_1$ defined on all thick parts.

\item  Compute $C_{\text{thick}}(g)$ (Lemma~\ref{lem:selfeq2}).

\item  Pick some initial homemorphisms $a_i:A_i \to B_{\sigma(i)}$ so that the boundary  values agree with $\phi_1$. This defines $\phi_1$ on the whole sphere.

\item    Find the set of vectors $m_1 \in \ZZ^\Gamma$ with coordinates between 0 and $N-1$, where $N$ is as in Proposition~\ref{p:finiteN} such that
   $h_1=\phi_1 \circ m_1$ lifts through $f$ and $g$ so that $$( \phi_1 \circ m_1) \circ f =g \circ h_2.$$ { \bf For all} vectors $m_1$ in this set {\bf do}
 \setlength{\leftskip}{2.5cm}
 
 \item By the discussion above $h_2=m_2 \circ \phi_2 $ with $m \in \ZZ^\Gamma$. Compute $m_2$.
\item   Find the finite index subgroup  $G_1$ of $ C_\text{thick}(f) $ of all elements $\psi$ such that $\psi \circ h_1$ lifts through $f$ and $g$ (Lemma~\ref{lem:selfeq1}). 
 
\item  For every $\psi\in G_1$ we have $$\psi  \circ h_1  \circ f = g \circ   n_\psi \circ \psi \circ  h_2 $$ where $n_\psi \in \ZZ^\Gamma$.  The map $\psi \mapsto n_\psi-m_2$ is a homomorphism.

\item   Similarly, find the finite index subgroup $G_2$ of $\ZZ^\Gamma$ of all elements $k$ such that $k\circ h_1$ lifts through $f$ and $g$. For every $k\in G_1$ we have $$k  \circ h_1  \circ f = g \circ   n_k  \circ  h_2 $$ where $n_k \in \ZZ^\Gamma$.  The map $k \mapsto n_k-m_2$ is also a homomorphism (linear).  

\item  Using generators of $ C_\text{thick}(g) $ construct $\psi$ and $k$ such that $ k+m_1=n_k+n_\psi+m_2$. If $m_1-m_2$ is not in the image of $n_k+n_\psi-k$, \textbf{ continue.}

\item  Output {\bf maps are equivalent} and $\psi \circ k \circ h_1$; halt.
  
  \setlength{\leftskip}{1.5cm}
  
\item {\bf end do}  

\setlength{\leftskip}{0cm}
\item {\bf end do}
\item output {\bf maps are not equivalent} and halt. 
\end{enumerate}
If the algorithm exits on step 17, then $\phi\circ h_0$ realizes the equivalence between $f$ and $g$, by construction. Otherwise, no such equivalence exists, by Proposition~\ref{p:NormalEquivalence}, and thus the above algorithm satisfies the conditions of our main theorem.

\bibliographystyle{amsalpha}

\bibliography{biblio}

\end{document}